\tikzset{noddee/.style={circle,draw=black,fill=black,inner sep=2pt}}
\tikzset{nodempty/.style={circle,draw=black,inner sep=2pt}}
\definecolor{red}{RGB}{255,0,0}
\definecolor{blue}{RGB}{0,0,255}
\definecolor{green}{RGB}{0,255,0}
\newtheorem{theorem}{Theorem}[section]
\newtheorem{corollary}[theorem]{Corollary}
\newtheorem{lemma}[theorem]{Lemma}
\newtheorem{proposition}[theorem]{Proposition}
\theoremstyle{definition}
\newtheorem{definition}[theorem]{Definition}
\DeclareMathOperator*{\Exp}{Exp}
\newcommand {\abs}[1] {\vert #1 \vert}
\newcommand {\set}[1]  {\left\{#1\right\}}
\newcommand {\fpt} {{\sf FPT}}
\newcommand {\apx} {{\sf APX}}
\newcommand {\xp} {{\sf XP}}
\newcommand {\p} {{\sf P}}
\newcommand {\np} {{\sf NP}}
\newcommand {\wone} {{\sf W[1]}}
\newcommand {\yes} {{\sf YES}}
\newcommand {\no} {{\sf NO}}
\newcommand {\bigoh}{\mathcal{O}}
\newcommand {\itemref}[1] {\ref{itm:#1}\emph{)}}
\newcommand {\pp}{{\cal P}}
\newcommand {\qq}{{\cal Q}}
\title{On two extensions of equimatchable graphs\thanks{Some results from this work were announced in the Proceedings of the 14th Cologne-Twente
Workshop on Graphs and Combinatorial Optimization~\cite{CTW2016}.}}
\author[1]{Zakir Deniz\thanks{zakirdeniz@sdu.edu.tr}}
\author[2]{T\i naz Ekim\thanks{tinaz.ekim@boun.edu.tr}}
\author[3,4]{Tatiana Romina Hartinger\thanks{tatiana.hartinger@iam.upr.si}}
\author[3,4]{Martin Milani\v c\thanks{martin.milanic@upr.si}}
\author[5]{Mordechai Shalom\thanks{cmshalom@telhai.ac.il}}
\affil[1]{\normalsize Department of Mathematics, S{\"u}leyman Demirel University, Isparta, Turkey}
\affil[2]{\normalsize Department of Industrial Engineering, Bo\u{g}azi\c{c}i University, Istanbul, Turkey}
\affil[3]{\normalsize University of Primorska, UP IAM, Muzejski trg 2, SI-6000 Koper, Slovenia}
\affil[4]{\normalsize University of Primorska, UP FAMNIT, Glagolja\v ska 8, SI-6000 Koper, Slovenia}
\affil[5]{\normalsize TelHai College, Upper Galilee, 12210, Israel}
\begin{document}
\maketitle

\begin{sloppypar}
\begin{abstract}
A graph is said to be equimatchable if all its maximal matchings are of the same size.
In this work we introduce two extensions of the property of equimatchability by defining
two new graph parameters that measure how far a graph is from being equimatchable. The first one, called the matching gap, measures the difference between the sizes of a maximum matching and a minimum maximal matching.
The second extension is obtained by introducing the concept of equimatchable sets; a set of vertices in a graph $G$ is said to be equimatchable if all maximal matchings of $G$ saturating the set are of the same size. Noting that $G$ is equimatchable if and only if the empty set is equimatchable, we study the equimatchability defect of the graph, defined as the minimum size of an equimatchable set in it. We develop several inapproximability and parameterized complexity results and algorithms regarding the computation of these two parameters,
a characterization of graphs of unit matching gap, exact values of the equimatchability defect of cycles, and 
sharp bounds for both parameters.

\smallskip\noindent
\textbf{Keywords:} Minimum maximal matching, equimatchable graph, Edge dominating set, Gallai-Edmonds decomposition, parameterized complexity.
\end{abstract}
\end{sloppypar}

\section{Introduction}
A \emph{matching} is a set of pairwise disjoint edges in a graph. A matching is \emph{maximal} if it is not contained in any other matching and \emph{maximum} if it is of maximum size.
Given a graph $G$, we denote by $\nu(G)$ and $\beta(G)$ the sizes of a maximum matching and of a minimum maximal matching of $G$, respectively.
Among the many notions related to matchings studied in the literature,
the central one for this paper is the notion of equimatchable graphs.
A graph $G$ is said to be \emph{equimatchable} if all its maximal matchings are of the same size, that is, if $\beta(G)=\nu(G)$.
The question of characterizing equimatchable graphs was posed by Gr\"{u}nbaum in $1974$~\cite{Gruenbaum}; in the same year, equimatchable graphs were
studied by Lewin~\cite{Lewin}. In $1984$, they were shown to be polynomially recognizableby Lesk et al.~\cite{LeskPlummerPulleyblank}; they were also a subject of several other and more recent investigations~\cite{Favaron,KPS2003,KP2009,FHV2010,demange_ekim_equi,EK2016}.

A distance-based extension of equimatchability was recently studied by Kartynnik and Ryzhikov~\cite{KR2016}. In this paper we generalize the property of equimatchability of graphs in two further ways by introducing two graph parameters measuring how far a graph is from being equimatchable. The main part of our paper is thus naturally split into two interrelated parts, each dealing with one of these extensions.
Our approach and results can be summarized as follows:

\medskip
\noindent{\bf I. The matching gap of a graph.}
Our first extension is a graph parameter that we term the \emph{matching gap}, denoted by $\mu(G)$ and defined as the difference $\nu(G)-\beta(G)$. In other words, the matching gap measures the length of the interval of the sizes of maximal matchings of the graph. Clearly, $\mu(G)=0$ if and only if $G$ is equimatchable. We study this parameter in Section~\ref{sec:AlmostEqm}. We characterize graphs of unit matching gap, that is, graphs whose maximal matchings are all of maximum size or one less. We then establish a strong inapproximability result about the problem of computing the matching gap as well as $\wone$-hardness of the problem for a parameter larger than the matching gap. Subsequently, $\xp$ algorithms are provided for this parameter and two other smaller parameters.

\medskip
\vbox{\noindent{\bf II. Equimatchable sets.}
In Section \ref{sec:EqmSets}, we extend the notion of equimatchability from graphs to subsets of vertices of a graph.
We define a set of vertices in a graph to be equimatchable if all maximal matchings of the graph saturating the set are of the same size. We introduce a graph invariant called \emph{equimatchability defect} and denoted by $\eta(G)$, which measures the smallest size of an equimatchable set of a graph. Clearly, $\eta(G)=0$ if and only if $G$ is equimatchable. In Section~\ref{sec:EqmSets-hardness}, we give a hitting set formulation of this parameter and based on it present a reduction from the vertex cover problem showing that computing the equimatchability defect is \apx-hard. On the positive side, we provide an $\xp$ algorithm. In Section~\ref{sec:EqmSets-cycles}, we determine the exact value of $\eta$ for cycles. In Section~\ref{sec:EqmSets-bounds} we give a sharp, polynomial-time computable upper bound for the equimatchability defect in terms of the matching number of the graph.}

Our results on equimatchable sets and the equimatchability defect are based on relations between the equimatchability defect, the vertex cover number, and the clique number, as well the following variant of matching extendability introduced recently by Costa et al.~in~\cite{CdWP}: a graph $G$ is said to be
\emph{expandable} if for every two non-adjacent vertices $u$, $v$ of $G$, the graph $G-u-v$ has a perfect matching.

\medskip
We further connect these two extensions of equimatchability to each other by showing that the matching gap of a graph can be bounded from above by
its equimatchability defect. On the other hand, we observe that the equimatchability defect cannot be bounded from above by any function of the matching gap.

We conclude the paper in Section \ref{sec:Conclusion} with some open questions and further research directions arising from this work.

\bigskip
\noindent{\bf Definitions and Notation.}
The following are some graph theoretical definitions used throughout the paper. Given a graph $H$, we say that a graph is \emph{$H$-free} if it contains no induced subgraph isomorphic to $H$.
For a vertex $v$, the open neighborhood of $v$ in a subgraph $H$ is denoted by $N_H(v)$.
For a subset $U \subseteq V$, $N(U)$ is the union of the neighborhoods of the vertices in $U$.
An edge between two vertices $x$ and $y$ is denoted as $xy$.
We denote paths and cycles as sequences of vertices or as sequences of edges wherever appropriate.
Given two graphs $G_1$ and $G_2$, their {\it disjoint union} is the graph $G_1+G_2$ with vertex set $V(G_1)\cup V(G_2)$ (disjoint union) and edge set $E(G_1)\cup E(G_2)$ (if $G_1$ and $G_2$ are not vertex-disjoint, we first replace one of them with a disjoint isomorphic copy).
We write $kG$ for the disjoint union of $k$ copies of $G$. The {\it join} of two graphs $G_1, G_2$, denoted by $G_1 \ast G_2$, is the graph obtained by adding all the possible edges between vertices of $G_1$ and vertices of $G_2$ to their disjoint union.
For a matching $M$ in a graph $G$ we denote by $V(M)$ the set of vertices of $G$ \emph{saturated} (or \emph{covered}) by $M$. A vertex $v \in V(G)$ not saturated by $M$ is called \emph{exposed}. We denote by $\Exp(M)$ the set of vertices left exposed by $M$.
A matching $M$ is said to \emph{isolate} an independent set $I$ if every vertex of $I$ is a component of $G - V(M)$.
(Equivalently, if no vertex of $I$ is covered by $M$, while every vertex in $N(I)$ is.)
For a graph $G$, we denote by $\nu(G)$ its {\it matching number}, that is, the maximum size of a matching of $G$, and by $\beta(G)$ its {\it MMM number}, that is, the minimum size of a maximal matching of $G$.

Given a matching $M$ in a graph $G$, a path $P=e_1 \ldots e_k$ is $M$-\emph{alternating} if
either ($e_i \in M$ if and only if $i$ is odd) or ($e_i \in M$ if and only if $i$ is even).
An $M$-\emph{alternating cycle} is an even cycle of $G$ where half of its edges are from $M$. It is well known that the symmetric difference of two matchings $M, M'$ consists of $M$-alternating paths and $M$-alternating cycles. An $M$-\emph{decreasing} (resp., $M$-\emph{augmenting}) path is an $M$-alternating path with both endpoints covered (resp., left exposed) by $M$.
A well-known characterization of maximum matchings due to Berge states that a matching $M$ in a graph is maximum if and only if $G$ has no $M$-augmenting paths~\cite{MR0094811}.

The following lemma will be useful in our proofs.

\begin{lemma}\label{lem:set-matchings}
Let $M$ be a maximal matching in a graph $G$. Then for every $k \in [\abs{M},\nu(G)]$ there exists a maximal matching $M'$ in $G$ of size $k$ covering $V(M)$.
\end{lemma}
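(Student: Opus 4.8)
The plan is to start from $M$ itself and then repeatedly enlarge it by augmenting along $M$-augmenting paths, one at a time, arguing that each such step preserves maximality and coverage of $V(M)$ while increasing the size by exactly one. The base case $k=\abs{M}$ is handled by taking $M'=M$, which is maximal, has size $\abs{M}$, and trivially covers $V(M)$.

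The key observation is a simple characterization of maximal matchings: a matching $N$ in $G$ is maximal if and only if its exposed set $\Exp(N)$ is independent, since otherwise an edge joining two exposed vertices could be added to $N$. I would combine this with the effect of augmentation on the exposed set. If $N$ is any matching and $P$ is an $N$-augmenting path with endpoints $u$ and $v$, then forming the symmetric difference $N \triangle P$ turns $u$ and $v$ from exposed to covered, leaves every internal vertex of $P$ covered (each such vertex is incident to exactly one matching edge both before and after the swap), and affects no vertex off $P$. Hence $\Exp(N \triangle P)=\Exp(N)\setminus\set{u,v}$, so the exposed set only shrinks under augmentation.

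These two facts together give the engine of the proof. Suppose $N$ is a maximal matching with $\abs{N}<\nu(G)$ that covers $V(M)$. By Berge's theorem, since $N$ is not maximum, there is an $N$-augmenting path $P$; set $N'=N \triangle P$. Then $\abs{N'}=\abs{N}+1$, and since $\Exp(N')\subseteq\Exp(N)$ while $\Exp(N)$ is independent, $\Exp(N')$ is independent as well, so $N'$ is again maximal. Moreover $V(N')\supseteq V(N)\supseteq V(M)$, so $N'$ still covers $V(M)$. Thus from any maximal matching of size strictly less than $\nu(G)$ covering $V(M)$ we can produce one of size exactly one larger with the same two properties.

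Starting from $M$ and iterating this step, we obtain maximal matchings covering $V(M)$ of every size $\abs{M}, \abs{M}+1, \ldots, \nu(G)$; stopping once the size reaches the prescribed $k$ yields the desired $M'$. I expect no serious obstacle here: the only point requiring genuine care is the verification that augmentation preserves maximality, which the exposed-set characterization makes immediate, sidestepping the more delicate task one might otherwise fear of explicitly extending a partially augmented matching back to a maximal one without overshooting the target size $k$.
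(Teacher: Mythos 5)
Your proposal is correct and follows essentially the same route as the paper: induct on the size, augment along an $M$-augmenting path guaranteed by Berge's theorem, and observe that the exposed set shrinks by the two endpoints, so it stays independent and the new matching remains maximal while still covering $V(M)$. No gaps.
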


\begin{proof}
It suffices to show that if $G$ has a maximal matching $M$ of size $k < \nu(G)$ covering a set $S$, then it also has a maximal matching of size $k+1$ covering $S$. Since $M$ is not maximum, $G$ contains an $M$-augmenting path $P$ whose endpoints $u,v$ are exposed by $M$. Let $M' = M \triangle P$ be the matching obtained by applying this augmenting path to $M$. Then
$V(M')=V(M)\cup\{u,v\}$, that is, $M'$ covers $S$. Moreover, since $M$ is maximal, $\Exp(M)$ is an independent set. Therefore, $\Exp(M')=\Exp(M)\setminus\{u,v\}$ is an independent set, implying that $M'$ is maximal.
\end{proof}

\begin{corollary}\label{cor:matchings}
Let $G$ be a graph and $k \in [\beta(G), \nu(G)]$. Then, $G$ has a maximal matching of size $k$.
\end{corollary}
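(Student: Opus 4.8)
The plan is to derive this statement immediately from Lemma~\ref{lem:set-matchings} by instantiating that lemma at a minimum maximal matching. First I would let $M$ be a maximal matching of $G$ of minimum size, so that $\abs{M} = \beta(G)$ by definition of the MMM number. Since $M$ is in particular maximal, the hypothesis of Lemma~\ref{lem:set-matchings} is satisfied, and the lemma guarantees, for every $k$ in the interval $[\abs{M},\nu(G)]$, the existence of a maximal matching $M'$ of $G$ of size exactly $k$. Because $\abs{M}=\beta(G)$, this interval is precisely $[\beta(G),\nu(G)]$, so for any $k$ in the stated range we obtain the desired maximal matching $M'$ of size $k$. (The lemma in fact produces an $M'$ covering $V(M)$, but this extra covering property is not needed for the corollary.)

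I do not expect any genuine obstacle here, as the corollary is essentially a reformulation of Lemma~\ref{lem:set-matchings} with the lower endpoint of the interval specialized. The only points worth making explicit are that the choice of $M$ as a \emph{minimum} maximal matching is what makes $\abs{M}$ equal to the left endpoint $\beta(G)$ of the target interval, and that the upper bound $k\le\nu(G)$ is exactly the condition that keeps $k$ within the range on which the lemma asserts existence. With these two observations the argument is complete in a single application of the lemma.
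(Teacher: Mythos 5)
Your proof is correct and is exactly the intended derivation: the paper states the corollary without proof as an immediate consequence of Lemma~\ref{lem:set-matchings}, obtained precisely by instantiating that lemma at a minimum maximal matching so that $\abs{M}=\beta(G)$. Nothing is missing.
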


\noindent{\bf Approximation and Parameterized Complexity.}
$\apx$ is the class of problems approximable in polynomial time to within some constant. A problem $\Pi$ is \emph{\apx-hard} if every problem in $\apx$ reduces to $\Pi$ via an approximation-preserving reduction~\cite{MR1734026}. For every \apx-hard problem $\Pi$, there exists a constant $c_\Pi>1$
such that $\Pi$ cannot be approximated to within a factor of $c_\Pi$ unless \p = \np.

Given two optimization problems $\Pi$ and $\Pi '$, we say that $\Pi$ is \emph{$L$-reducible} to $\Pi'$ if there exists a polynomial-time transformation $f$ from instances of $\Pi$ to instances of $\Pi'$ and positive constants $a$ and $b$ such that for every instance $x$ of $\Pi$, we have:
(i) ${\textrm opt}_{\Pi'} (f(x)) \le a\cdot {\textrm opt}_{\Pi}(x)$, and (ii) for every feasible solution $y'$ of $f(x)$ with objective value $c_2$ we can compute in polynomial time a solution $y$ of $x$ with objective value $c_1$ such that $|{\textrm opt}_{\Pi}(x) - c_1| \le b \cdot |{\textrm opt}_{\Pi'}(f(x)) - c_2|$. To show that a problem is \apx-hard, it suffices to show that an \apx-hard problem $\Pi'$
is $L$-reducible to it~\cite{MR1734026}.

An instance of a parameterized problem is a pair $(I,k)$ where $I$ is the input for a decision problem and $k$ is a parameter. An $\fpt$ algorithm for a parameterized problem is one that solves the problem in time $f(k) \cdot \abs{I}^c$ for some constant $c \geq 1$ where $\abs{I}$ is the size of the input $I$ and $f$ is any function. An $\xp$ algorithm is an algorithm that solves the problem in polynomial time for every fixed value of $k$. A  parameterized problem is in $\fpt$ (resp., $\xp$) if it admits an $\fpt$ (resp., $\xp$) algorithm.

An $\fpt$ reduction transforms every instance $(I,k)$ of a parameterized problem $\Pi$ to an instance $(I',k')$ of another parameterized problem $\Pi'$ in $\fpt$ (i.e., $f(k) \cdot \abs{I}^c$) time such that $I'$ is a $\yes$ instance if and only if $I$ is a $\yes$ instance and $k'$ depends only on $k$.
If a problem $\Pi$ can be $\fpt$-reduced to a problem $\Pi'$ and $\Pi'$ admits an $\fpt$ then $\Pi$ admits an $\fpt$. In the infinite W hierarchy of complexity classes, if a (parameterized) problem is $\wone$-hard then it does not admit an $\fpt$ unless $\p = \np$. In order to show that a problem is $\wone$-hard it is sufficient to show that another $\wone$-hard problem $\Pi'$ is $\fpt$-reducible to it.
For further background in parameterized complexity we refer to~\cite{MR3380745,DowneyF13}.

\section{The matching gap of a graph}\label{sec:AlmostEqm}
Our first extension of equimatchability deals with the following natural graph parameter.
Recall that $\nu(G)$ is the matching number of $G$ and $\beta(G)$ is the MMM number of $G$.

\begin{definition}\label{def:matching-gap}
The \emph{matching gap} of a graph $G$ is denoted by $\mu(G)$ and defined as the difference $\nu(G)-\beta(G)$.
\end{definition}

Clearly, for every graph $G$ we have $\mu(G) \ge 0$, with equality if and only if $G$ is equimatchable.
Consequently, the parameter $\mu(G)$ is a measure of how far $G$ is from being equimatchable.

\subsection{A characterization of almost equimatchable graphs}

In this section we characterize graphs with $\mu(G) = 1$, which we term \emph{almost equimatchable}. We will give a short direct proof, though the given characterization can also be derived from the characterization of well-covered graphs due to Tankus and Tarsi~\cite{MR1438624}.
Given a maximal matching $M$ in a graph $G$, we say that $M$ is \emph{with an augmenting $P_4$} if
$G$ has an $M$-augmenting path isomorphic to a $P_4$.

\begin{lemma}\label{lem:pair P4 M}
Let $G$ be a graph and let $k$ be a positive integer with $\beta(G)\le k<\nu(G)$.
Then $G$ has a maximal matching of size $k$ with an augmenting $P_4$.
\end{lemma}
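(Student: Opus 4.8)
The plan is to run an extremal argument over maximal matchings of size exactly $k$. First, by Corollary~\ref{cor:matchings}, the hypothesis $\beta(G)\le k\le\nu(G)$ guarantees that $G$ has at least one maximal matching of size $k$; since $k<\nu(G)$, none of these is maximum, so by Berge's theorem each admits an $M$-augmenting path. I would therefore consider all pairs $(M,P)$ in which $M$ is a maximal matching of size $k$ and $P$ is an $M$-augmenting path, and choose one minimizing the length of $P$. Writing $P=v_0v_1\cdots v_{2t+1}$ with $v_0,v_{2t+1}$ exposed by $M$, the odd-indexed edges $v_{2i-1}v_{2i}\in M$ and the even-indexed edges $v_{2i}v_{2i+1}\notin M$, the whole statement reduces to proving $t=1$ (so that $P$ is a $P_4$). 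I would argue by contradiction, assuming $t\ge2$.

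The engine of the argument is a local ``slide'' at one endpoint of $P$: set $M''=(M\setminus\{v_1v_2\})\cup\{v_0v_1\}$. This is again a matching of size $k$ (we delete the $M$-edge at $v_1$ and add $v_0v_1$, which is legal because $v_0$ is exposed), and its exposed set is $(\Exp(M)\setminus\{v_0\})\cup\{v_2\}$. Once $M''$ is known to be maximal, the subpath $v_2v_3\cdots v_{2t+1}$ becomes an $M''$-augmenting path: its edges keep their $M$-membership, as the slide only touched $v_0,v_1,v_2$, and both endpoints $v_2,v_{2t+1}$ are exposed by $M''$. Its length is $2t-1<2t+1$, contradicting the minimal choice of $(M,P)$ and thereby forcing $t=1$.

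The one delicate point, which I expect to be the main obstacle, is verifying that $M''$ is maximal, i.e.\ that $(\Exp(M)\setminus\{v_0\})\cup\{v_2\}$ is independent. As $\Exp(M)$ is independent (because $M$ is maximal), the only adjacency that could spoil this is between $v_2$ and some exposed vertex $w\in\Exp(M)\setminus\{v_0\}$. I would rule this out using precisely the $t\ge2$ assumption: if such a $w$ existed, then $w\,v_2\,v_1\,v_0$ would be an $M$-augmenting path isomorphic to $P_4$, since $wv_2\notin M$ (as $w$ is exposed), $v_2v_1\in M$, and $v_1v_0\notin M$ (as $v_0$ is exposed), with all four vertices distinct ($w$ is exposed, hence different from the matched $v_1,v_2$, and different from $v_0$ by the choice of $w$). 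That augmenting path has length $3<2t+1$, again contradicting the minimality of $(M,P)$. Hence no such $w$ exists, $M''$ is maximal, and the extremal argument closes.
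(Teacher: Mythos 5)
Your proof is correct and follows essentially the same strategy as the paper's: slide the matching along a shortest augmenting path, using shortestness to guarantee that the newly exposed vertex has no neighbours among the remaining exposed vertices, so that the shifted matching of size $k$ stays maximal. The only difference is organisational --- the paper performs the whole shift at once on a fixed maximal matching of size $k$ and reads off the augmenting $P_4$ at the front of the original path, whereas you iterate a single-step slide via an extremal choice over pairs $(M,P)$.
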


\begin{proof}
Let $M'$ be a maximal matching of $G$ such that $|M'| = k$. Let $S = \Exp(M')$. Since $M'$ is not maximum, we have $\abs{S} \ge 2$.
Consider a shortest $M'$-augmenting path $P'=x_1 x_2 \ldots x_\ell$. Then $\ell$ is even and also $\ell \geq 4$. Assume $\ell \neq 4$.
We observe that $x_1, x_\ell \in S$. Moreover, $N(x_i) \cap S \subseteq \{x_\ell\}$ for each $i \in \set{4,6,\ldots,\ell-2}$, since otherwise there would be an $M'$-augmenting path shorter than $P'$ (namely, $Q=x x_i x_{i+1} \ldots x_\ell$ where $x \in (S \cap N(x_i)) \setminus \set{x_\ell}$).
This implies that in particular $x_1 x_4 \notin E(G)$. Let $M$ be the matching obtained by shifting $M'$ one step towards $x_\ell$ as follows: $M=(M' \setminus \{ x_4 x_5,\ldots,x_{\ell-2} x_{\ell-1} \})\cup \{x_5 x_6,\ldots,x_{\ell-1} x_\ell\}$. Clearly, $\abs{M}=\abs{M'}$. Moreover, $\Exp(M) = (S \setminus\{x_\ell\})\cup\{x_4\}$. We observe that $\Exp(M)$ is an independent set.
Therefore $M$ is a maximal matching of $G$ of size $k$.
Moreover, since $x_2 x_3 \in M$ and $\{x_1,x_4\}\subseteq \Exp(M)$, we infer that $x_1 x_2 x_3 x_4$ is an $M$-augmenting $P_4$.
\end{proof}

\begin{lemma}\label{lem: almost equim > }
For any graph $G$, we have $\mu(G)\geq 1$ if and only if $G$ has a maximal matching with an augmenting $P_4$.
\end{lemma}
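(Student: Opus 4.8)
The plan is to prove the two implications separately, with the forward direction resting entirely on the already-established Lemma~\ref{lem:pair P4 M} and the reverse direction being a short deduction from the definitions.

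For the forward implication, suppose $\mu(G)\geq 1$, which by Definition~\ref{def:matching-gap} means $\beta(G)<\nu(G)$. I would simply invoke Lemma~\ref{lem:pair P4 M} with the choice $k=\beta(G)$: since $\beta(G)\le \beta(G)<\nu(G)$, the hypothesis $\beta(G)\le k<\nu(G)$ is met, and the lemma yields a maximal matching of size $\beta(G)$ with an augmenting $P_4$. This gives a maximal matching of $G$ with an augmenting $P_4$, as required.

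For the reverse implication, suppose $G$ has a maximal matching $M$ with an augmenting $P_4$. The presence of an $M$-augmenting path shows, by Berge's characterization, that $M$ is not a maximum matching, so $\abs{M}<\nu(G)$. On the other hand, because $M$ is a maximal matching, its size is at least the minimum over all maximal matchings, so $\beta(G)\le \abs{M}$. Combining these gives $\beta(G)\le \abs{M}<\nu(G)$, whence $\mu(G)=\nu(G)-\beta(G)\geq \nu(G)-\abs{M}\geq 1$ (using that the quantities are integers).

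I expect no genuine obstacle here, since the substantive construction—the shifting argument producing a same-size maximal matching together with an augmenting $P_4$—was already carried out in the proof of Lemma~\ref{lem:pair P4 M}. The only point worth stating explicitly is that the forward direction is the nontrivial one and is handled precisely by applying that lemma at the extreme value $k=\beta(G)$, while the reverse direction follows immediately from the definitions of $\nu$, $\beta$, and $\mu$ together with the fact that an augmenting path certifies non-maximality.
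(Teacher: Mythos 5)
Your proof is correct and takes essentially the same approach as the paper: the forward direction is exactly the paper's application of Lemma~\ref{lem:pair P4 M} to a maximal matching that is not maximum (your choice $k=\beta(G)$ is one valid instantiation). The reverse direction differs only cosmetically: you invoke Berge's theorem to get $\abs{M}<\nu(G)$ directly, whereas the paper exhibits the explicit larger maximal matching $M'=(M\setminus\set{wy})\cup\set{uw,vy}$; both arguments reduce to $\beta(G)\le\abs{M}<\nu(G)$.
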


\begin{proof}
Assume $\mu(G) \geq 1$. Then, by applying Lemma~\ref{lem:pair P4 M} to a maximal matching of $G$ that is not of maximum size, we conclude that
$G$ has a maximal matching with an augmenting $P_4$.

Now, suppose that $G$ has a maximal matching, say $M$, with an augmenting $P_4$, say $u w y v$ (see Figure~\ref{fig:matching  isolating}).
Then $G$ has two maximal matchings of different sizes, namely $M$ and $M'=(M \setminus \set{wy})  \cup \set{uw,vy}$. Hence, $\mu(G) \geq 1$.
\end{proof}

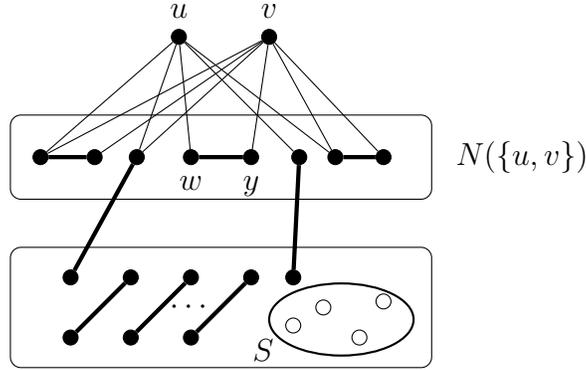
\begin{figure}[h]
\begin{center}
\begin{tikzpicture}[scale=.8]
\draw [rounded corners] (0,.3) rectangle (7,1.7);
\node at (8.5,1) (z) {$N(\{u,v\})$};
\node [noddee] at (.5,1) (x1)  {};	
\node [noddee] at (1.4,1) (x2)  {}
	edge [ultra thick] (x1);	
\node [noddee] at (2.1,1) (x3) {};			
\node [noddee] at (3,1) (x4) [label=below:$w$] {};
\node [noddee] at (4,1) (x5) [label=below:$y$] {}
	edge [ultra thick] (x4);		
\node [noddee] at (4.8,1) (x6) {};
\node [noddee] at (5.4,1) (x7) {};
\node [noddee] at (6.2,1) (x8)  {}
	edge [ultra thick] (x7);
\node [noddee] at (2.8,3) (u) [label=$u$] {}
	edge [] (x1)
	edge [] (x3)
	edge [] (x4)
	edge [] (x6)
	edge [] (x7);
\node [noddee] at (4.3,3) (v) [label=$v$] {}
	edge [] (x1)
	edge [] (x2)
	edge [] (x3)
	edge [] (x5)
	edge [] (x7)
	edge [] (x8);			

\draw [rounded corners] (0,-2.5) rectangle (7,-.5);

\node [noddee] at (1,-1) (p1) {}
	edge [ultra thick] (x3);
\node [noddee] at (1,-2) (y1)  {};
\node [noddee] at (2,-1) (y2)  {}
	edge [ultra thick] (y1);	

\node [noddee] at (2,-2) (y3)  {};
\node [noddee] at (3,-1) (y4)  {}
	edge [ultra thick] (y3);	
\node at (3,-1.5) () {$\cdots$};
\node [noddee] at (3,-2) (y5)  {};
\node [noddee] at (4,-1) (y6)  {}
	edge [ultra thick] (y5);	
\node [noddee] at (4.7,-1) (p2) {}
	edge [ultra thick] (x6);
\draw[thick] (5.5,-1.7) ellipse (1.2cm and .6cm);	

\node [nodempty] at (5.8,-2) (p3) {};
\node [nodempty] at (5.2,-1.5) (p3) {};
\node [nodempty] at (4.7,-1.8) (p3) {};
\node [nodempty] at (6.2,-1.4) (p3) {};
\node at (4.2,-2.2) () {$S$};

\end{tikzpicture}
\end{center}
\caption{A maximal matching $M$, depicted in bold, with an augmenting $P_4$, namely $u w y v$, where $S = \Exp(M)\setminus\{u,v\}$.}
\label{fig:matching  isolating}
\end{figure}

\begin{corollary}\label{cor:equim}
A graph $G$ is equimatchable if and only if it does not contain a maximal matching with an augmenting $P_4$.
\end{corollary}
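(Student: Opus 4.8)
The plan is to obtain this corollary as a direct logical consequence of Lemma~\ref{lem: almost equim > }, essentially by contraposition. The key observation I would invoke is the one already recorded right after Definition~\ref{def:matching-gap}: since $\mu(G)=\nu(G)-\beta(G)\ge 0$ for every graph $G$, with equality precisely when $G$ is equimatchable, the statement ``$G$ is equimatchable'' is logically equivalent to ``$\mu(G)=0$,'' and hence to the negation of ``$\mu(G)\ge 1$.'' So the whole corollary reduces to rephrasing when $\mu(G)\ge 1$ fails.

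Concretely, the first step is to write $G$ is equimatchable $\iff \mu(G)=0 \iff$ it is not the case that $\mu(G)\ge 1$. The second step is to apply Lemma~\ref{lem: almost equim > }, which asserts that $\mu(G)\ge 1$ holds if and only if $G$ has a maximal matching with an augmenting $P_4$. Negating both sides of this biconditional, the condition $\mu(G)=0$ is equivalent to $G$ having no maximal matching with an augmenting $P_4$. Chaining these equivalences yields exactly the claimed characterization of equimatchable graphs.

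I do not expect any genuine obstacle here, since all the work has already been done in establishing Lemma~\ref{lem: almost equim > } (which in turn rests on the shifting argument of Lemma~\ref{lem:pair P4 M}). The only thing to be careful about is the logical bookkeeping: one must make explicit that equimatchability corresponds to $\mu(G)=0$ rather than merely $\mu(G)\le 1$, and that the previous lemma is being used in its contrapositive form. Thus the proof is a short two-line deduction, and I would present it simply as an unwinding of the definition of $\mu$ together with Lemma~\ref{lem: almost equim > }.

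\begin{proof}
Recall that $G$ is equimatchable if and only if $\mu(G)=0$, which, since $\mu(G)\ge 0$ always holds, is equivalent to $\mu(G)\ge 1$ being false. By Lemma~\ref{lem: almost equim > }, $\mu(G)\ge 1$ holds if and only if $G$ has a maximal matching with an augmenting $P_4$. Negating this equivalence shows that $G$ is equimatchable if and only if $G$ contains no maximal matching with an augmenting $P_4$.
\end{proof}
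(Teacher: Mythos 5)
Your proof is correct and matches the paper's (implicit) argument exactly: the paper states this corollary without proof as an immediate consequence of Lemma~\ref{lem: almost equim > }, obtained precisely by the contrapositive reading you spell out, using $\mu(G)\ge 0$ with equality iff $G$ is equimatchable. Nothing is missing.
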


Now, we characterize graphs of matching gap at most one.

\begin{lemma}\label{lem: almost equim < }
Let $G$ be any graph. Then $\mu(G)\leq 1$ if and only if every two
maximal matchings with an augmenting $P_4$ have the same size.
\end{lemma}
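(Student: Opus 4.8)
The plan is to prove both implications by exploiting the one-step relationship between a maximal matching carrying an augmenting $P_4$ and its augmentation. The key structural observation, already extracted in the proof of Lemma~\ref{lem: almost equim > }, is that if $M$ is a maximal matching with an augmenting $P_4$ on vertices $u,w,y,v$ (with $wy \in M$ and $u,v$ exposed), then $M' = (M\setminus\{wy\})\cup\{uw,vy\}$ is again a \emph{maximal} matching, and $|M'| = |M|+1$; maximality of $M'$ follows because $\Exp(M') = \Exp(M)\setminus\{u,v\}$ is a subset of the independent set $\Exp(M)$.

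For the forward direction, I would assume $\mu(G)\le 1$, equivalently $\nu(G)\le\beta(G)+1$, and let $M$ be any maximal matching of $G$ with an augmenting $P_4$. Applying the observation above produces a maximal matching $M'$ of size $|M|+1$, so $|M|+1 = |M'| \le \nu(G) \le \beta(G)+1$, whence $|M|\le\beta(G)$. Since $\beta(G)$ is by definition the minimum size of a maximal matching, this forces $|M|=\beta(G)$. Thus every maximal matching with an augmenting $P_4$ has size exactly $\beta(G)$, so in particular any two of them have the same size (the statement being vacuously true when no such matching exists, which by Corollary~\ref{cor:equim} is precisely the case $\mu(G)=0$).

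For the reverse direction, I would argue by contraposition: assume $\mu(G)\ge 2$, i.e. $\beta(G) \le \nu(G)-2$. Then both $k=\beta(G)$ and $k'=\beta(G)+1$ lie in the range $\beta(G)\le \cdot < \nu(G)$, since $k' = \beta(G)+1 \le \nu(G)-1 < \nu(G)$. Invoking Lemma~\ref{lem:pair P4 M} twice, once with $k$ and once with $k'$, yields a maximal matching of size $\beta(G)$ with an augmenting $P_4$ and a maximal matching of size $\beta(G)+1$ with an augmenting $P_4$. These are two maximal matchings with an augmenting $P_4$ of different sizes, which is the desired negation.

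The argument is short because Lemma~\ref{lem:pair P4 M} already does the heavy lifting of manufacturing, at each admissible size, a maximal matching that carries an augmenting $P_4$. The only point requiring care is the direction of the single-step move: in the forward direction one augments a given $P_4$ to gain one edge (and must recheck maximality via the exposed-set argument), while Lemma~\ref{lem:pair P4 M} instead \emph{shifts} a long augmenting path so as to expose a $P_4$ without changing the matching size. Keeping these two operations distinct is the main thing to watch; once that is clear, both implications reduce to comparing sizes against $\beta(G)$ and $\nu(G)$.
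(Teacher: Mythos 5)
Your proof is correct and uses essentially the same ingredients as the paper's: the one-step augmentation along the $P_4$ for one direction and Lemma~\ref{lem:pair P4 M} for the other. The only difference is organizational — you prove the forward implication directly (pinning every maximal matching with an augmenting $P_4$ to size exactly $\beta(G)$) and the reverse by contraposition, whereas the paper argues the first by contradiction and the second via a case analysis around a fixed matching $M$ — but the mathematical content is identical.
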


\begin{proof}
Let us first prove necessity of the condition. Suppose that the condition does not hold, i.e.,
$G$ has two maximal matchings $M$ and $M'$ with augmenting $P_4$s respectively $P=u w y v$ and $P'=u' w' y' v'$, and such that $\abs{M} < \abs{M'}$. We observe that $G$ has three maximal matchings of different sizes, namely
$M$, $M'$, and $M'' := (M' \setminus \set{w' y'}) \cup \set{u' w', y'v'}$. (Indeed,
$\abs{M} < \abs{M'} < \abs{M''}$ and $M''$ is maximal.) Therefore, $\mu(G) \geq 2$.

Now, we prove sufficiency. First, note that if there is no
maximal matching with an augmenting $P_4$, then we have $\mu(G)=0$ by Corollary~\ref{cor:equim}.
Otherwise, consider a
maximal matching $M$ with an augmenting $P_4$, say $u w y v$.
Then $G$ has two maximal matchings $M$ and $M'=(M \setminus \set{wy}) \cup \set{uw, vy}$ with $\abs{M} < \abs{M'}$.

It is sufficient to show that $G$ does not contain a maximal matching smaller than $M$ or bigger than $M'$. Suppose that there is a maximal matching $M_1$ in $G$ such that $\abs{M_1} < \abs{M}$. Consider a maximal matching $M_1'$ with an augmenting $P_4$ such that
$\abs{M_1'}=\abs{M_1}$. (The existence of such a matching is guaranteed by Lemma~\ref{lem:pair P4 M}.)
Then $M_1'$ and $M$ are two maximal matchings of $G$ with an augmenting $P_4$
such that $\abs{M_1'} < \abs{M}$, contradicting our assumption.

Now suppose that there is a maximal matching bigger than $M'$. Then $M'$ is not a maximum matching and therefore, by Lemma~\ref{lem:pair P4 M},
we infer that $G$ contains a maximal matching $M''$ with an augmenting $P_4$ such that $\abs{M''}=\abs{M'}$.
Now $M$ and $M''$ are two maximal matchings of $G$ with an augmenting $P_4$
such that $\abs{M''} > \abs{M}$, contradicting our assumption.
\end{proof}

By Lemma \ref{lem: almost equim > } and Lemma \ref{lem: almost equim < }, we have the following characterization of almost equimatchable graphs.

\begin{theorem}\label{thm: gamma 1}
A graph $G$ is almost equimatchable if and only if the following two conditions hold:
\begin{itemize}
\item[$(i)$] $G$ contains a maximal matching with an augmenting $P_4$.
\item[$(ii)$] Every two maximal matchings with an augmenting $P_4$ have the same size.
\end{itemize}
\end{theorem}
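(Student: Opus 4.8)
The plan is to derive the theorem directly from the two preceding lemmas by unwinding the definition of \emph{almost equimatchable}, which means exactly $\mu(G) = 1$. Since $\mu(G)$ is a nonnegative integer, the equality $\mu(G) = 1$ is equivalent to the conjunction of the two inequalities $\mu(G) \geq 1$ and $\mu(G) \leq 1$. Each of these inequalities is precisely characterized by one of the lemmas already proved: Lemma~\ref{lem: almost equim > } equates $\mu(G) \geq 1$ with condition $(i)$, and Lemma~\ref{lem: almost equim < } equates $\mu(G) \leq 1$ with condition $(ii)$. Combining the two equivalences gives $\mu(G) = 1$ if and only if both $(i)$ and $(ii)$ hold, which is exactly the claimed characterization. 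The proof is thus a purely logical assembly of the ``$\geq 1$'' and ``$\leq 1$'' characterizations, with all the combinatorial content already handled in Lemma~\ref{lem:pair P4 M} and its two consequences.

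The one point worth flagging, so the combination is not misread, is the interplay between the two conditions, which explains why both are needed. Condition $(ii)$ alone does not characterize almost equimatchability: if $G$ has no maximal matching with an augmenting $P_4$, then $(ii)$ holds vacuously, yet by Corollary~\ref{cor:equim} the graph is in fact equimatchable, i.e., $\mu(G) = 0$. This is precisely why condition $(i)$ must be imposed in tandem: it rules out the equimatchable case by forcing $\mu(G) \geq 1$, while $(ii)$ simultaneously caps the gap at $\mu(G) \leq 1$. No genuine obstacle arises, since the substantive work — producing, from any maximal matching that is not maximum, a maximal matching of the same size admitting an augmenting $P_4$ — was already carried out in Lemma~\ref{lem:pair P4 M} and used to establish Lemmas~\ref{lem: almost equim > } and~\ref{lem: almost equim < }.
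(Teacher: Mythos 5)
Your proposal is correct and matches the paper exactly: the paper also obtains Theorem~\ref{thm: gamma 1} as an immediate combination of Lemma~\ref{lem: almost equim > } (characterizing $\mu(G)\ge 1$ via condition $(i)$) and Lemma~\ref{lem: almost equim < } (characterizing $\mu(G)\le 1$ via condition $(ii)$). Your added remark on why $(ii)$ alone is vacuous for equimatchable graphs is a correct, if optional, clarification.
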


At this point it might be worth pointing out that Theorem \ref{thm: gamma 1} cannot be generalized to graphs with matching gap bigger than $1$, using vertex-disjoint augmenting $P_4$s for a maximal matching. Consider the graph $G$ shown in Figure~\ref{fig:no induced P4}. It can be seen that $\mu(G)=2$ since $G$ admits a perfect matching and a minimum maximal matching with four vertices exposed (shown by bold edges in Figure \ref{fig:no induced P4}). However, it can be checked that there is no maximal matching in $G$ with two vertex-disjoint augmenting $P_4$s.

\begin{figure}[htbp]
\begin{center}
\begin{tikzpicture}[scale=.6]
\node [noddee] at (0,0) (v1)  {};
\node [noddee] at (1,1) (v2)  {}
	edge [] (v1);
\node [noddee] at (2,0) (v3)  {}
	edge [ultra thick] (v2);
\node [noddee] at (3,1) (v4)  {}
	edge [] (v3);
\node [noddee] at (4,0) (v5)  {}
	edge [] (v3)
	edge [] (v4);
\node [noddee] at (5,1) (v6)  {}
	edge [] (v5);
\node [noddee] at (6,0) (v7)  {}
	edge [ultra thick] (v6);
\node [noddee] at (7,1) (v8)  {}
	edge [] (v7);
\node [noddee] at (3,2.5) (v9)  {}
	edge [ultra thick] (v4);
\node [noddee] at (4,3.5) (v12)  {}
	edge [] (v9);			
\end{tikzpicture}
\end{center}
\caption{A graph $G$ with $\mu(G)=2$. The edges depicted in bold form the unique minimum maximal matching of $G$.}
\label{fig:no induced P4}
\end{figure}
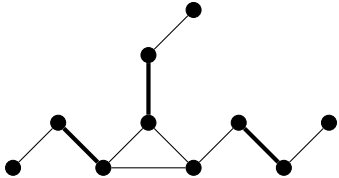

\subsection{The parameterized complexity of computing the matching gap}\label{sec:FPT}

\newcommand{\mg}{\textsc{MatchingGap}}
$\mg$ is the decision problem taking a graph $G$ and an integer $k$ as input, with the objective of determining whether the matching gap of $G$ is at least $k$. Since computing $\beta(G)$, the MMM number of a given graph $G$, is \np-hard~\cite{GavrilYanakkakis80}, and computing the matching number $\nu(G)$ is polynomial, $\mg$ is an \np-complete problem. (The problem is in $\np$ since a yes instance can be certified by a pair of maximal matchings, say $M$ and $M'$, such that $|M'|\ge |M|+k$.) In this section, we analyze the parameterized complexity of $\mg$ with respect to various parameterizations.
In this context, the following lemma regarding different parameterizations will be useful. We remark that the lemma holds in a much broader context, but to keep the presentation simple, we restrict ourselves to the formulation of the lemma in terms of $\xp$ algorithms for $\mg$.

\begin{lemma}\label{lem:XP}
Let $\sigma$ be a non-negative graph invariant such that $\mg$ has an $\xp$ algorithm when parameterized by
$k+\sigma(G)$. Then, $\mg$ has an $\xp$ algorithm when parameterized by $\mu(G)+\sigma(G)$.
\end{lemma}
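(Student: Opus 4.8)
The plan is to convert the given XP algorithm $\mathcal{A}$, which decides $\mg$ with parameter $k+\sigma(G)$, into one whose running time is controlled by $\mu(G)+\sigma(G)$ instead. The naive idea of simply running $\mathcal{A}$ on the given instance $(G,k)$ fails precisely when $k$ is much larger than $\mu(G)$: in that regime $\mathcal{A}$ may spend time $|G|^{f(k+\sigma(G))}$, which is not bounded by any expression of the form $|G|^{g(\mu(G)+\sigma(G))}$. The whole point will therefore be to avoid ever invoking $\mathcal{A}$ with a second argument exceeding $\mu(G)+1$.

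First I would record the monotonicity of the objective: by definition $\mathcal{A}(G,j)$ answers \textsc{yes} exactly when $\mu(G)\ge j$, so the answers to $\mathcal{A}(G,1),\mathcal{A}(G,2),\ldots$ form a block of \textsc{yes}'s followed by a block of \textsc{no}'s, with the switch occurring between $j=\mu(G)$ and $j=\mu(G)+1$. Hence I can determine $\mu(G)$ by a linear search \emph{from below}: run $\mathcal{A}(G,1),\mathcal{A}(G,2),\ldots$ and stop at the first \textsc{no}, which occurs at $j=\mu(G)+1$. Crucially, this search only ever queries values $j\le\mu(G)+1$, so every call has parameter at most $\mu(G)+1+\sigma(G)$. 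Having computed $\mu(G)$, I answer the original instance $(G,k)$ by the trivial comparison: output \textsc{yes} if $k\le\mu(G)$ and \textsc{no} otherwise.

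For the running-time bound I would first assume, without loss of generality, that $f$ is non-decreasing (replacing $f$ by $q\mapsto\max_{q'\le q}f(q')$ if necessary, which only increases the time bound). Then each of the calls runs in time at most $|G|^{f(\mu(G)+1+\sigma(G))}$, and the number of calls is $\mu(G)+1\le\nu(G)+1\le|G|$ since $\beta(G)\ge 0$. Thus the total running time is at most $|G|^{f(\mu(G)+1+\sigma(G))+1}$. Because $\mu(G)+1+\sigma(G)=(\mu(G)+\sigma(G))+1$, the exponent is a function $g(\mu(G)+\sigma(G)):=f\bigl(\mu(G)+\sigma(G)+1\bigr)+1$ of the parameter alone, which yields the desired XP algorithm.

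The only real obstacle is the one flagged above, namely keeping the queried thresholds bounded by $\mu(G)+1$. This is exactly why a linear search from below is essential: a binary search over $[0,\nu(G)]$ would query large thresholds and thereby reintroduce the blow-up $|G|^{f(\text{large})}$ that we are trying to avoid. Once the search is arranged to stay at thresholds $\le\mu(G)+1$, the remaining analysis is routine.
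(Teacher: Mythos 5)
Your proposal is correct and follows essentially the same route as the paper: a linear search from below, calling the given algorithm on thresholds $1,2,\ldots$ until the first \textsc{no}, so that no call ever uses a threshold above $\mu(G)+1$, after which the answer is read off by comparing $k$ with $\mu(G)$. Your running-time bookkeeping (exponent $f(\mu(G)+\sigma(G)+1)+1$) is in fact slightly cleaner than the paper's, which reduces to the case $\mu(G)\ge 1$ in order to absorb the $+1$ into a factor of $2$.
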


\begin{proof}
Since $\mg$ is polynomial for equimatchable graphs, we may assume that the input graph $G$ satisfies $\mu(G)\ge 1$. 
Let $A$ be an $\xp$ algorithm for $\mg$ when parameterized by $k+\sigma(G)$ and let $f$ be a non-decreasing function such that
$A$ runs in time $O(|V(G)|^{f(k+\sigma(G))})$. Using $A$, we can obtain an $\xp$ algorithm, say $A'$, for $\mg$ when parameterized by $\mu(G)+\sigma(G)$, as follows. We set $i = 1$ and keep increasing the value of $i$ by one, as long as the answer of
algorithm $A$ on input $(G,i)$ is $\yes$. At the end we will have a positive integer $i'$ such that
the answer of algorithm $A$ on input $(G,i')$ is $\yes$
and the answer of algorithm $A$ on input $(G,i'+1)$ is $\no$.
Clearly, $\mu(G) = i'$ and thus we can determine whether $\mu(G)\ge k$.
This concludes the description of the algorithm $A'$.

The correctness of $A'$ follows from the correctness of $A$.
To estimate the running time of $A'$, disregarding polynomial factors, note that the running time 
is dominated by $\bigoh(|V(G)|^{f(i'+1+\sigma(G))})$, which, since $i' = \mu(G)\ge 1$, function $f$ is non-decreasing, and $\sigma(G)\ge 0$, is in
$\bigoh(|V(G)|^{f(2(\mu(G)+\sigma(G)))})$. It follows that $A'$ is an $\xp$ algorithm for $\mg$
when parameterized by $\mu(G)+\sigma(G)$, as desired.
\end{proof}

First we consider the case when the parameter is $\abs{V(G)}/2-\beta(G)$.
We show that the problem is \wone-hard for this parameter, which implies \wone-hardness with respect to the parameter $\mu(G)$. In other words, the problem does not admit $\fpt$ algorithms with respect to any of these two parameters under standard complexity theory assumptions. We complete this result with a simple $\xp$ algorithm for $\mg$ when parameterized by $\abs{V(G)}/2-\beta(G)$. We recall that an $\xp$ algorithm runs in polynomial time for every fixed value of the parameter.
We further investigate the parameterized complexity under other parameters whose values are smaller than $\abs{V(G)}/2-\beta(G)$ and achieve $\xp$ algorithms for these parameters, too.

\begin{sloppypar}
We start with a lemma that describes a reduction to be used in proving the hardness of $\mg$.
Given a graph $G$, we denote by $\alpha(G)$ its independence number, that is, the maximum size of an independent set in $G$. 
\end{sloppypar}

\begin{lemma}\label{lem:ReductionAlphaToMu}
Let $G$ be a graph and $K(G) = (2G) \ast K_{2\abs{V(G)}} $ the join of the disjoint union of two copies of $G$ with a complete graph of
twice the order of $G$. Then we have
\[
\mu(K(G))= \alpha(G).
\]
\end{lemma}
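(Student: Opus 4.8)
The plan is to prove the equality $\mu(K(G)) = \alpha(G)$ by establishing the two inequalities separately, after first understanding the structure of maximal matchings in $K(G)$. Write $H = K(G)$, let $n = |V(G)|$, so the two copies of $G$ contribute $2n$ vertices and the clique $K_{2n}$ contributes another $2n$ vertices; thus $|V(H)| = 4n$ and $H$ is a graph on an even number of vertices. The key observation I would record first is that $\nu(H) = 2n$: since the clique has $2n$ vertices and is joined completely to everything, one can easily saturate all $4n$ vertices, so $H$ has a perfect matching and $\nu(H) = 2n = |V(H)|/2$. Consequently $\mu(H) = \nu(H) - \beta(H) = 2n - \beta(H)$, and the whole problem reduces to showing that $\beta(H) = 2n - \alpha(G)$, i.e.\ that a minimum maximal matching of $H$ leaves exactly $2\alpha(G)$ vertices exposed.

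The heart of the argument is analyzing which vertices a maximal matching $M$ of $H$ can leave exposed. Since the clique $K_{2n}$ is joined to every other vertex, any two exposed vertices must be non-adjacent, hence the exposed set $\Exp(M)$ is an independent set of $H$; because every clique vertex is adjacent to all others, at most one clique vertex can be exposed, and in fact if any vertex is exposed then no clique vertex can be exposed (an exposed clique vertex together with any other exposed vertex would be adjacent). So for a maximal matching that is not perfect, $\Exp(M)$ lies entirely within the two copies of $G$ and must be an independent set there. I would argue that the exposed vertices split between the two copies $G_1, G_2$ as an independent set in each, and since there are no edges between the two copies, $\Exp(M)$ is an independent set of the disjoint union $2G$, whose independence number is $2\alpha(G)$. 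This gives the bound $|\Exp(M)| \le 2\alpha(G)$, hence $\beta(H) = (|V(H)| - \max_M |\Exp(M)|)/2 \ge 2n - \alpha(G)$, yielding $\mu(H) \le \alpha(G)$.

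For the reverse inequality I would construct an explicit small maximal matching. Take a maximum independent set $I_1$ in the first copy $G_1$ and an isomorphic maximum independent set $I_2$ in the second copy $G_2$, each of size $\alpha(G)$, and let $I = I_1 \cup I_2$, an independent set of size $2\alpha(G)$ in $H$. The goal is to build a maximal matching whose exposed set is exactly $I$. I would match every vertex of $V(H) \setminus I$ so that $I$ is isolated in the sense defined in the preliminaries, using the complete join: each vertex of $(V(G_1)\setminus I_1) \cup (V(G_2)\setminus I_2)$ can be matched to a distinct clique vertex, and since there are $2n$ clique vertices and at most $2(n - \alpha(G)) \le 2n$ such vertices, with leftover clique vertices matched among themselves (the clique has even leftover count because $|V(H)|$ and $|I|$ are both even), we saturate all of $V(H) \setminus I$. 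Such a matching $M$ is maximal precisely because $I = \Exp(M)$ is independent, so no exposed edge remains; and $|M| = (4n - 2\alpha(G))/2 = 2n - \alpha(G)$. This shows $\beta(H) \le 2n - \alpha(G)$, completing the proof.

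The main obstacle I anticipate is the bookkeeping in the reverse construction: one must verify that the clique is large enough to absorb all non-independent vertices of the two copies of $G$ and that the parity of leftover clique vertices works out, which is exactly where the choice of clique size $2n = 2|V(G)|$ (rather than something smaller) is used. The careful counting that guarantees enough clique vertices to make $I$ isolated, together with the verification that the leftover vertices can be internally matched, is the one genuinely delicate point; the independence-set argument for the upper bound on exposed vertices is comparatively routine once the join structure is exploited.
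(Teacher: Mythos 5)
Your proposal is correct and follows essentially the same route as the paper's proof: the lower bound $\mu(K(G))\ge\alpha(G)$ via the explicit maximal matching that isolates a maximum independent set of $2G$ by matching everything else into the clique, and the upper bound via the observation that the exposed set of any maximal matching is an independent set contained in $V(2G)$, hence of size at most $2\alpha(G)$. The only cosmetic difference is that you phrase the bound through $\beta(H)=2n-\alpha(G)$ and spell out the parity/counting checks slightly more explicitly than the paper does.
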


\begin{proof}
Let $n=|V(G)|$. Clearly, $G'=K(G)$ admits a perfect matching, i.e., $\nu(G')=2n$.
Let $G_1 = 2G$ and let $I$ be a maximum independent set of $G_1$.
Note that $\alpha(G_1) = 2\alpha(G)$, in particular, $|I|$ is even.
We construct a maximal matching $M_I$ of $G'$ as follows.
Every vertex not in $I$ is matched to a vertex of the clique. The $\abs{I}$ remaining vertices of the clique are matched to each other with a perfect matching. Now $\Exp(M_I) = I$, therefore $M_I$ is maximal.
Moreover, $\abs{\Exp(M_I)}=|I| = \alpha(G_1) = 2\alpha(G)$, and
$\abs{M_I}=(4n-2\alpha(G))/2 = 2n-\alpha(G)$.
Therefore,
\[
\beta(G') \leq \abs{M_I} = 2n-\alpha(G) = \nu(G')-\alpha(G)\,,\]
implying $\mu(G') \geq \alpha(G)$.

Consider a maximal matching $M'$ of $G'$. Then $\Exp(M')$ is an independent set of even size and, by construction of $G'$, is a subset of $V(G_1)$.
Consequently, $\abs{\Exp(M')} \leq \alpha(G_1)$. By choosing $M'$ as a minimum maximal matching of $G'$ we get
\[
2\alpha(G) = \alpha(G_1) \geq \abs{\Exp(M')} = 4n - 2 \beta(G') = 2 \nu(G') - 2 \beta(G') = 2 \mu(G'),
\]
implying $\mu(G') \leq \alpha(G)$.
\end{proof}

The problem of computing the independence number of a given graph is \wone-hard when parameterized by solution size (see, e.g., \cite{DowneyF13}). Moreover, for any $\epsilon > 0$, there is no polynomial time algorithm approximating the independence number of a given $n$-vertex graph
to within $n^{1-\epsilon}$, unless \p = \np~\cite{MR2403018}.
Therefore, Lemma~\ref{lem:ReductionAlphaToMu} implies the analogous hardness results for the matching gap. In the inapproximability result below, we interpret $\mg$ as the maximization problem having the set ${\cal M}_G$ of all maximal matchings of a given graph $G$ as the set of feasible solutions and assigning the value $f(M)$, where $f(M) = \nu(G)-|M|$, as the objective function value to a maximal matching $M$.
Clearly, for any graph $G$, we have $\mu(G) = \max\{f(M): M\in {\cal M}_G\}$.
Furthermore, we note that $K(G)$ admits a perfect matching, hence $\mu(K(G))=\abs{V(K(G))}/2 - \beta(K(G))$.

\begin{theorem}
$\mg$ is:
\begin{enumerate}[i)]
\item \wone-hard when parameterized by the matching gap $\mu(G)$, and
\item inapproximable to within $\abs{V(G)}^{1-\epsilon}$ for every $\epsilon > 0$ (unless \p = \np)
\end{enumerate}
even when $G$ admits a perfect matching. Furthermore, the problem is \wone-hard when parameterized by $\abs{V(G)}/2 - \beta(G)$.
\end{theorem}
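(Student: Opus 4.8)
The plan is to derive all three claims from the single construction $G \mapsto K(G)$ of Lemma~\ref{lem:ReductionAlphaToMu}. This map is clearly computable in polynomial time, always produces a graph admitting a perfect matching, and satisfies $\mu(K(G)) = \alpha(G)$. Writing $n = |V(G)|$, the constructed graph has $|V(K(G))| = 4n$ vertices. Since $K(G)$ has a perfect matching we have $\nu(K(G)) = |V(K(G))|/2$, and hence the identity
$|V(K(G))|/2 - \beta(K(G)) = \mu(K(G)) = \alpha(G)$ holds; this is exactly the identity recorded just before the theorem statement, and it is what lets the same construction serve simultaneously for the parameterization by $\mu$ and by $|V(G)|/2 - \beta(G)$. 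Because every image graph has a perfect matching, all conclusions will automatically be valid even when restricted to inputs admitting a perfect matching.

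For the two $\wone$-hardness statements I would observe that $(G,k) \mapsto (K(G),k)$ is an $\fpt$ reduction from the Independent Set problem parameterized by the solution size $k$, which is $\wone$-hard. By Lemma~\ref{lem:ReductionAlphaToMu} we have $\alpha(G) \ge k$ if and only if $\mu(K(G)) \ge k$, so the reduction preserves yes/no answers, and the parameter $k$ is carried over verbatim. This yields $\wone$-hardness of $\mg$ parameterized by $\mu(G)$, and, via the identity $\mu(K(G)) = |V(K(G))|/2 - \beta(K(G))$ above, also $\wone$-hardness when parameterized by $|V(G)|/2 - \beta(G)$ (the latter being formally the stronger assertion, as this parameter dominates $\mu$).

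For the inapproximability statement I would argue by contraposition against the known impossibility of approximating $\alpha$ within $n^{1-\delta}$. Suppose some polynomial-time algorithm approximates the maximization version of $\mg$ within a factor of $N^{1-\epsilon}$, where $N$ is the number of vertices of its input. Running it on $K(G)$ returns a maximal matching $M$ with $f(M) = \nu(K(G)) - |M| \ge \mu(K(G))/N^{1-\epsilon} = \alpha(G)/(4n)^{1-\epsilon}$. The essential recovery step is that $\Exp(M)$ has size $2f(M)$ and, by the argument in the proof of Lemma~\ref{lem:ReductionAlphaToMu}, is an independent set contained in $2G$; splitting it across the two copies of $G$ and keeping the larger part yields, in polynomial time, an independent set of $G$ of size at least $f(M) \ge \alpha(G)/(4n)^{1-\epsilon}$.

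It then remains to absorb the constant coming from $N = 4n$ into the exponent: since $(4n)^{1-\epsilon} = 4^{1-\epsilon} n^{1-\epsilon} \le 4\, n^{1-\epsilon} \le n^{1-\epsilon/2}$ for all sufficiently large $n$, this produces a polynomial-time $n^{1-\epsilon/2}$-approximation for the independence number (finitely many small instances being handled by brute force), contradicting the inapproximability of $\alpha$ with $\delta = \epsilon/2$. I expect the only delicate point to be this bookkeeping, namely tracking the approximation ratio through the change of vertex count from $n$ to $4n$ and justifying the extraction of an independent set from $\Exp(M)$; the genuine combinatorial content is already isolated in Lemma~\ref{lem:ReductionAlphaToMu}, so the remaining steps are routine.
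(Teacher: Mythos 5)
Your proposal is correct and is essentially the paper's own argument: the paper derives all three claims directly from Lemma~\ref{lem:ReductionAlphaToMu} together with the known $\wone$-hardness and $\abs{V(G)}^{1-\epsilon}$-inapproximability of the independence number, noting as you do that $K(G)$ has a perfect matching so that $\mu(K(G))=\abs{V(K(G))}/2-\beta(K(G))$. You merely spell out details the paper leaves implicit (recovering an independent set of $G$ from $\Exp(M)$ and absorbing the factor $4$ from $N=4n$ into the exponent), and both steps are handled correctly.
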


We now proceed with positive results. Recall that in $\mg$, the input consists of a graph $G$ and a positive integer $k$ and the question
is whether $\mu(G)\ge k$.

\begin{sloppypar}
\begin{theorem}
There is an $\xp$ algorithm for $\mg$ when parameterized by $\abs{V(G)}/2 - \beta(G)$.
\end{theorem}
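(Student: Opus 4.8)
The plan is to reduce the computation of $\beta(G)$ to a bounded search over small vertex subsets, exploiting the fact that the parameter $\abs{V(G)}/2 - \beta(G)$ controls exactly the number of vertices exposed by a minimum maximal matching. Write $n = \abs{V(G)}$. First I would record the following characterization of the sets arising as exposed sets of maximal matchings: a set $S \subseteq V(G)$ equals $\Exp(M)$ for some maximal matching $M$ if and only if $S$ is independent and $G - S$ has a perfect matching. The forward direction holds because maximality forces $\Exp(M)$ to be independent while $M$ perfectly matches $V(G)\setminus \Exp(M)$; conversely, a perfect matching of $G - S$, viewed in $G$, is a matching exposing exactly $S$, and it is maximal precisely because $S$ is independent. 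Since a maximal matching exposing $S$ has size $(n - \abs{S})/2$, minimizing the size of a maximal matching is equivalent to maximizing $\abs{S}$. Hence, setting $s^* = \max\{\abs{S} : S \text{ independent and } G - S \text{ has a perfect matching}\}$, we get $\beta(G) = (n - s^*)/2$, so the parameter satisfies $n/2 - \beta(G) = s^*/2$; in particular $s^*$ is exactly twice the parameter.

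Next I would describe the algorithm. Compute $\nu(G)$ in polynomial time (e.g.\ via Edmonds' blossom algorithm); this determines $\mu(G) = \nu(G) - \beta(G)$ as soon as $\beta(G)$ is known. To find $s^*$, start from $s = n - 2\nu(G)$, the size of the set exposed by a maximum matching, which is already independent with a perfectly matchable complement, and test increasing values $s, s+2, s+4, \ldots$. For each $s$, enumerate all $\binom{n}{s}$ subsets of size $s$ and check in polynomial time whether any is independent and has a complement admitting a perfect matching. Let $s^*$ be the largest $s$ for which such a subset exists, return $\beta(G) = (n - s^*)/2$, and answer whether $\nu(G) - \beta(G) \ge k$.

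For correctness and, crucially, for termination of the upward search, I would invoke Corollary~\ref{cor:matchings}: $G$ has a maximal matching of every size in $[\beta(G), \nu(G)]$, so the valid subset sizes are exactly $\{n - 2\nu(G), n - 2\nu(G) + 2, \ldots, s^*\}$, a contiguous run in steps of two with no gaps. Thus the first failure met while incrementing $s$ by two occurs at $s^* + 2$, and the search halts there having correctly identified $s^*$. Since every enumerated size satisfies $s \le s^* + 2 = 2(n/2 - \beta(G)) + 2$ and there are at most $n/2-\beta(G)+2$ such sizes, the running time is dominated by the step $s = s^* + 2$, costing $\bigoh\!\left(n^{\,2(n/2-\beta(G))+2}\cdot \mathrm{poly}(n)\right)$ (using $\binom{n}{s}\le n^s$); this is polynomial for every fixed value of the parameter $n/2 - \beta(G)$, yielding the desired $\xp$ algorithm.

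I expect the only genuinely delicate point to be the termination and running-time argument: without the contiguity guaranteed by Corollary~\ref{cor:matchings} one could not safely stop the upward search at the first failure, and the crucial bound $n - 2\nu(G) \le s^*$ (which keeps every enumerated subset size below $2(n/2-\beta(G))+2$) relies on $n-2\nu(G)$ being itself an achievable exposed-set size. Everything else, namely the exposed-set characterization and the polynomial per-subset test, is routine.
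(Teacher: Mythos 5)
Your proof is correct and rests on the same core idea as the paper's: maximal matchings of a given size correspond exactly to independent sets of a prescribed size whose removal leaves a graph with a perfect matching, and one enumerates candidate sets whose size is bounded in terms of the parameter, checking each in polynomial time. The only organizational difference is that you compute $\beta(G)$ exactly via an explicit upward search over exposed-set sizes justified by Corollary~\ref{cor:matchings}, whereas the paper answers the decision question for each $k$ by enumerating independent sets of size $2(k+\sigma(G))$ with $\sigma(G)=\abs{V(G)}/2-\nu(G)$ and then lifts the parameterization from $k+\sigma(G)$ to $\mu(G)+\sigma(G)=\abs{V(G)}/2-\beta(G)$ via the generic Lemma~\ref{lem:XP}, whose proof carries out precisely the incremental search you spell out.
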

\end{sloppypar}

\begin{proof}
Let $\sigma(G) = \abs{V(G)}/2 - \nu(G)$. Note that $\abs{V(G)}/2 - \beta(G) = \mu(G)+\sigma(G)$.
Lemma~\ref{lem:XP} implies that
it suffices to show that there is an $\xp$ algorithm for $\mg$ when parameterized by
$k+\sigma(G)$.

Note that $\mu(G) \ge k$ if and only if $\beta(G) \leq \nu(G) - k$ if and only if there is a maximal matching $M$ of $G$ with at most $\nu(G) - k$ edges if and only if there is a maximal matching $M'$ of $G$ with exactly $\nu(G) - k$ edges if and only if there is an independent set $I$ of $G$ with $\abs{V(G)}-2(\nu(G) - k)$ vertices such that $G \setminus I$ has a perfect matching. Therefore, the following is a simple $\xp$ algorithm for the problem when parameterized by $k+\sigma(G)$: try every independent set $I$ of size $2(k+\sigma(G)) = \abs{V(G)} - 2 (\nu(G)-k)$ and check whether the graph $G - I$ has a perfect matching; return $\yes$ if and only if one of the trials succeeds.
\end{proof}

Since the value of the parameter $\abs{V(G)}/2 - \beta(G)$ can be very big, we further investigate the parameterized complexity of the problem under other parameters that in some cases may be significantly smaller than $\abs{V(G)}/2 - \beta(G)$. In the rest of this section we prove two theorems, each of which leads to an $\xp$ algorithm for the matching gap problem. The two algorithms are parameterized by two different parameters, both related to the Gallai-Edmonds decomposition theorem, which we now recall. A graph $G$ is said to be {\it factor-critical} if for every vertex $v\in V(G)$, graph $G-v$ has a perfect matching.

\begin{theorem}[Gallai-Edmonds decomposition~\cite{LP09}]\label{thm:GallaiEdmonds}
Let $G$ be a graph, $D(G)$ the set of vertices of $G$ that are not saturated by at least one maximum matching, $A(G)$ the set of vertices of $V(G) \setminus D(G)$ with at least one neighbor in $D(G)$, and $C(G) = V(G) \setminus (D(G) \cup A(G))$. Then:
\begin{enumerate}[i)]
\item the components of $G[D(G)]$ are factor-critical,
\item $G[C(G)]$ has a perfect matching,
\item every maximum matching of $G$ matches every vertex of $A(G)$ to a vertex of a distinct component of $G[D(G)]$.
\end{enumerate}
\end{theorem}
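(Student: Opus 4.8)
The final statement in the excerpt is the Gallai-Edmonds decomposition theorem, which is cited from a reference [LP09]. This is a well-known classical theorem, not something the authors are proving themselves.

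The plan is to fix an arbitrary maximum matching $M$ of $G$ and to characterize the three sets through $M$-alternating paths emanating from the vertices left exposed by $M$. The central lemma I would establish first is that a vertex $v$ lies in $D(G)$ if and only if there is an $M$-alternating path of even length (allowing length zero) from some $M$-exposed vertex to $v$. For the forward direction, if $v\in D(G)$ then some maximum matching $M'$ exposes $v$; since the symmetric difference $M\triangle M'$ decomposes into $M$-alternating paths and even cycles, and $|M|=|M'|$ forces every such path to have even length, the path terminating at $v$ (or $v$ itself, if $v$ is already exposed by $M$) is the desired alternating path. For the converse, flipping $M$ along such a path produces a matching of the same size that exposes $v$; Berge's theorem guarantees it is again maximum, so $v\in D(G)$. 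This lemma exhibits $D(G)$ as the set of ``outer'' vertices of the alternating forest rooted at the exposed vertices, and $A(G)$ as the set of matched ``inner'' vertices reachable by odd alternating paths.

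Using this characterization I would next derive the global matching structure, from which parts $(ii)$ and $(iii)$ follow quickly. Each $a\in A(G)$ is adjacent to some $d\in D(G)$; tracing an even alternating path from an exposed vertex to $d$ and appending the edge $da$ shows $a$ is matched by $M$, and in fact $M$ matches every vertex of $A(G)$ into $D(G)$, since an $A$-vertex matched elsewhere would yield an $M$-augmenting path. Two vertices of $A(G)$ cannot be matched into the same component of $G[D(G)]$: otherwise, joining the even alternating paths that reach their two distinct matched neighbours through a path inside the connected component would create an $M$-augmenting path, contradicting the maximality of $M$; this gives $(iii)$. Because no vertex of $C(G)=V(G)\setminus(D(G)\cup A(G))$ is adjacent to $D(G)$ and none is reachable from an exposed vertex, $M$ must match every vertex of $C(G)$ to another vertex of $C(G)$, so $M$ restricts to a perfect matching of $G[C(G)]$, establishing $(ii)$.

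For part $(i)$, take a component $K$ of $G[D(G)]$ and an arbitrary vertex $v\in K$; I would produce a perfect matching of $K-v$. The structure above shows that $M$ leaves exactly one vertex of $K$ unmatched within $K$ --- either $M$-exposed or matched to $A(G)$ --- so $M\cap E(K)$ is a near-perfect matching of $K$ missing a single vertex $w$. If $w=v$ we are done. Otherwise I would invoke the central lemma inside $K$: since both $v$ and $w$ are reachable from exposed vertices by even alternating paths and $K$ is connected, there is an even $M$-alternating path from $w$ to $v$ lying entirely in $K$; flipping $M$ along it yields a maximum matching whose restriction to $E(K)$ is a perfect matching of $K-v$, as required.

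The step I expect to be the main obstacle is precisely this factor-criticality argument: one must guarantee that the alternating path relocating the single ``deficiency'' of $K$ from $w$ to the target vertex $v$ can be confined to $K$, and that the flips used throughout do not disturb the matching on the rest of $G$ or the invariant that each component of $G[D(G)]$ carries exactly one deficiency while each vertex of $A(G)$ is matched into a distinct component. Confining all alternating-path surgery to a single component, and verifying that this global invariant is preserved under it, is the technically delicate heart of the proof, and it is here that the definitions of $A(G)$ and $C(G)$ --- in particular the absence of edges between $D(G)$ and $C(G)$ --- must be used with care.
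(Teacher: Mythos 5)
The paper does not prove this statement at all: it is the classical Gallai--Edmonds structure theorem, imported verbatim from Lov\'asz and Plummer~\cite{LP09} and used as a black box in Section~\ref{sec:FPT}. Your opening note is therefore the correct observation, and there is no proof in the paper to compare your sketch against.

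Judged on its own terms, your sketch follows the standard alternating-forest route (characterize $D(G)$ as the vertices reachable from $\Exp(M)$ by even $M$-alternating paths, then read off the structure), and the two directions of your central lemma are sound: the forward direction via the symmetric difference $M\triangle M'$ and the converse via flipping along the path and invoking Berge are both fine. The genuine gaps are exactly where you predict them, plus one you understate. First, every argument of the form ``append the edge $da$ (or a path through the component) to the even alternating path reaching $d$ and obtain an augmenting path'' silently assumes the concatenation is a simple alternating path; it need not be, and repairing this is precisely why blossoms appear in Edmonds' theory. This affects your proofs of ``$M$ matches $A(G)$ into $D(G)$'' and of part $(iii)$. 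Second, your argument for $(iii)$ needs an alternating path \emph{inside} a component of $G[D(G)]$ joining the two matched neighbours, which in effect presupposes the near-perfect-matching structure of that component, i.e.\ part $(i)$ --- so the order of deduction is circular as written. Third, part $(i)$ itself is left open: you need that every vertex of a component $K$ is missed by a maximum matching \emph{of $K$}, not merely of $G$, and the standard resolution (Gallai's lemma: a connected graph each of whose vertices is missed by some maximum matching is factor-critical, combined with a counting/stability argument localizing the deficiency to $K$) is absent from the sketch. Since the theorem is cited rather than proved in the paper, none of this affects the paper; but as a self-contained proof your outline is an honest roadmap with the hardest thirty percent still to be filled in.
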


Theorem \ref{thm:GallaiEdmonds} implies that every maximum matching induces a perfect matching on $C(G)$ and leaves at most one exposed vertex at every component of $G[D(G)]$. We denote by $G_{\it AD}$ the bipartite graph obtained from $G$ by removing the vertices of $C(G)$, removing the
edges with both endpoints in $A(G)$, and contracting every component of $G[D(G)]$ to one vertex. Furthermore, let $\rho(G)$ denote the maximum degree of a vertex of $A(G)$ in $G_{\it AD}$. In other words, $\rho(G)$ is the maximum number of components of $G[D(G)]$ that a vertex of $A(G)$ is adjacent to.
Note that $\rho(G)$ is well-defined if and only if $A(G)$ is non-empty, and if this is the case, then $\rho(G)\ge 1$.
To ease the running time analysis of Algorithm~\ref{alg:recognition k-quasi-eqm-two}, we extend the definition of $\rho(G)$ to the case when $A(G) = \emptyset$, by setting $\rho(G) = 1$ in this case.

The algorithms that we present for the matching gap problem are $\xp$ algorithms
for the parameters $\mu(G)+\abs{A(G)}$ and $\mu(G)+\rho(G)$, respectively.

\begin{theorem} \label{thm:CharacOfMatchingGapKWithTwoMatchings}
Let $G$ be a graph and $k$ a non-negative integer. Then $\mu(G) \geq k$ if and only if
$G$ contains an independent set $I$ and two matchings $M_A$ and $M_A^*$ such that
\begin{enumerate}[i)]
\item \label{itm:sizeOfI2} $\abs{I}=2k$,
\item \label{itm:MAIsMinimal} $M_A$ is an inclusion-wise minimal matching saturating $A(G) \setminus I$ and exposing $I$,
\item \label{itm:MAStarCoversA} $M_A^*$ consists of $\abs{A(G)}$ edges each of which joins a vertex $a \in A(G)$ to a vertex in a distinct component of $G[D(G)]$,
\item \label{itm:ComponentHasPerfectMatching} if $X$ is a connected component of $G[C(G)]$ or a connected component of $G[D(G)]$ with a vertex incident to $M_A^*$, then $G[X \setminus (I \cup V(M_A))]$ has a perfect matching $M_X$, and
\item \label{itm:ComponentHasAlmostPerfectMatching} if $X$ is a connected component of $G[D(G)]$ with no vertex incident to $M_A^*$ then $G[X \setminus (I \cup V(M_A))]$ has a matching $M_X$ exposing exactly one vertex $v_X$ where $N(v_X) \cap I = \emptyset$.
\end{enumerate}
\end{theorem}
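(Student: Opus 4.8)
The plan is to translate the statement, via Corollary~\ref{cor:matchings}, into a claim about a single maximal matching and then read that matching off the Gallai--Edmonds decomposition. Write $c_D$ for the number of connected components of $G[D(G)]$ and let $d:=\abs{V(G)}-2\nu(G)$ be the deficiency of $G$; by Theorem~\ref{thm:GallaiEdmonds} we have $d=c_D-\abs{A(G)}$, and a maximal matching of $G$ has size $\nu(G)-k$ exactly when it exposes $d+2k$ vertices. Since $\mu(G)\ge k$ is equivalent to $\beta(G)\le\nu(G)-k$, which by Corollary~\ref{cor:matchings} is equivalent to the existence of a maximal matching of size exactly $\nu(G)-k$, the theorem reduces to showing that such a maximal matching exists if and only if the data $I,M_A,M_A^*$ exist.

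For the \emph{sufficiency} direction I would start from $I,M_A,M_A^*$ and assemble the matching
\[
M:=M_A\cup\bigcup_X M_X,
\]
where $X$ ranges over the components of $G[C(G)]$ and of $G[D(G)]$ and each $M_X$ is the matching supplied by \itemref{ComponentHasPerfectMatching} or \itemref{ComponentHasAlmostPerfectMatching}. Since each $M_X$ lives inside $X\setminus(I\cup V(M_A))$, the pieces are pairwise vertex-disjoint and disjoint from $M_A$, so $M$ is a matching, and tracking which vertices each piece covers yields
\[
\Exp(M)=I\cup\set{v_X:\ X\text{ a component of }G[D(G)]\text{ meeting no edge of }M_A^*}.
\]
The crucial point is maximality: this exposed set is independent because $I$ is independent, distinct $D$-components have no edges between them, and the condition $N(v_X)\cap I=\emptyset$ in \itemref{ComponentHasAlmostPerfectMatching} rules out an edge between a leftover vertex $v_X$ and $I$ (in particular between $v_X$ and any exposed vertex of $A(G)\cap I$). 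Finally, since $M_A^*$ meets exactly $\abs{A(G)}$ of the $c_D$ components of $G[D(G)]$, there are $c_D-\abs{A(G)}=d$ leftover vertices $v_X$, so $\abs{\Exp(M)}=2k+d$ and $\abs{M}=\nu(G)-k$; hence $\beta(G)\le\nu(G)-k$ and $\mu(G)\ge k$.

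For the \emph{necessity} direction I would take a maximal matching $M$ with $\abs{M}=\nu(G)-k$, so that $\abs{\Exp(M)}=d+2k$, and fix $M_A^*$ to be any set of $\abs{A(G)}$ edges joining $A(G)$ to distinct components of $G[D(G)]$, which exists by Theorem~\ref{thm:GallaiEdmonds}. I would then let $M_A$ be the set of edges of $M$ incident to $A(G)$; as $M$ is a matching, this is an inclusion-wise minimal matching saturating $A(G)\cap V(M)$ and avoiding (hence exposing) $\Exp(M)$. The set $I$ would be chosen as a size-$2k$ independent subset of $\Exp(M)$ that contains $A(G)\cap\Exp(M)$ and leaves outside exactly one exposed vertex in each of the $d$ components of $G[D(G)]$ not met by $M_A^*$; this makes $M_A$ minimal saturating $A(G)\setminus I$ and exposing $I$. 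The matchings $M_X$ demanded by \itemref{ComponentHasPerfectMatching}--\itemref{ComponentHasAlmostPerfectMatching} would then be produced from the factor-criticality of the $D$-components and the perfect matchability of $G[C(G)]$ guaranteed by Theorem~\ref{thm:GallaiEdmonds}.

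I expect the necessity direction to be the main obstacle. Its difficulty is that the restriction of an arbitrary size-$(\nu(G)-k)$ maximal matching $M$ to a factor-critical $D$-component may leave two or more exposed vertices, so $\Exp(M)$ need not present itself as ``$A(G)\cap\Exp(M)$ plus one vertex per deficient component.'' The real work is therefore to reorganize $M$ inside the parts: using $M$-alternating paths within each factor-critical component, together with the perfect matching of $G[C(G)]$, to absorb all but one exposed vertex of each deficient component --- along with the vertices exposed in $C(G)$ and in $A(G)$ --- into the independent set $I$, while simultaneously arranging the parities that make $G[X\setminus(I\cup V(M_A))]$ perfectly matchable for the $C$-components and the $M_A^*$-met $D$-components and near-perfectly matchable for the remaining $D$-components, and ensuring every surviving leftover vertex $v_X$ satisfies $N(v_X)\cap I=\emptyset$. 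Keeping these requirements mutually consistent --- in particular that exactly $2k$ vertices land in $I$ and that $M_A$ remains minimal --- is the delicate part of the argument.
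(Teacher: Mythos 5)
Your sufficiency direction is essentially the paper's: you assemble $M=M_A\cup\bigcup_X M_X$, verify that $\Exp(M)=I\cup\set{v_X}$ is independent, and count exposed vertices (the paper counts by building an explicit maximum matching extending $M_A^*$ rather than by the deficiency identity $d=c_D-\abs{A(G)}$, but the two counts are equivalent). The necessity direction, however, has a genuine gap, and it is precisely the one you flag at the end without closing. You fix $M_A^*$ as an \emph{arbitrary} system of $\abs{A(G)}$ edges into distinct components of $G[D(G)]$ and then try to carve $I$ out of $\Exp(M)$ so that exactly one exposed vertex survives in each $D$-component missed by $M_A^*$. But which components are ``deficient'' is dictated by your arbitrary $M_A^*$, while the distribution of $\Exp(M)$ over the components is dictated by $M$, and nothing makes these compatible: a component missed by $M_A^*$ may contain no $M$-exposed vertex at all (for instance if $M$ saturates it entirely, partly via an edge of $M_A$), so no candidate $v_X\in\Exp(M)$ exists there and the restriction of $M$ does not supply the near-perfect matching required by condition (v); dually, several exposed vertices can pile up in a component that $M_A^*$ does meet. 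The ``reorganization by alternating paths within components'' you invoke to repair this is not carried out, and it is not routine, since the relevant alternating paths pass through $A(G)$ and are not confined to single components.

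The paper sidesteps all of this with one move you are missing: it does not choose $M_A^*$ independently of $M$. Starting from the maximal matching $M$ with $\nu(G)-k$ edges, it augments $M$ along $k$ augmenting paths to a \emph{maximum} matching $M^*$ with $V(M)\subseteq V(M^*)$, and then sets $M_A^*$ to be the edges of $M^*$ incident to $A(G)$ and $I:=V(M^*)\setminus V(M)=\Exp(M)\setminus\Exp(M^*)$. Applying Gallai--Edmonds to the maximum matching $M^*$ then yields conditions (iii)--(v) for free: $M^*$ matches $A(G)$ into distinct $D$-components, saturates $C(G)$ and every $M_A^*$-met $D$-component, and leaves exactly one exposed vertex $v_X$ in each remaining $D$-component. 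Since $\Exp(M^*)\subseteq\Exp(M)$, each such $v_X$ is also $M$-exposed, the restrictions of $M$ to the components directly provide the matchings $M_X$, the set $I$ is automatically independent of size $2k$ (being a subset of $\Exp(M)$ with $M$ maximal), and $N(v_X)\cap I=\emptyset$ because $\set{v_X}\cup I\subseteq\Exp(M)$. Replacing your arbitrary $M_A^*$ by this one closes your outline.
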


\begin{proof}
~\\
($\Rightarrow$) Assume that $\mu(G) \geq k$. Then $G$ contains a maximal matching $M$ with $\nu(G)-k$ edges. One can augment $M$ iteratively $k$ times using $k$ augmenting paths to get a maximum matching $M^*$ such that $V(M) \subseteq V(M^*)$, or equivalently $\Exp(M^*) \subseteq \Exp(M)$. Let $I=V(M^*) \setminus V(M)$, and let $M_A$ (resp. $M_A^*$) be the subset of $M$ (resp. $M^*$) that consists of its edges incident to $A(G)$. We now show that $I, M$, and $M^*$ satisfy the claimed properties.

\itemref{sizeOfI2} Clearly, $\abs{I} = \abs{V(M^*) \setminus V(M)} = \abs{V(M^*)} - \abs{V(M)} = 2 \abs{M^*} - 2 \abs{M} = 2k$.

\itemref{MAIsMinimal} $A(G)$ is saturated by $M^*$, thus $A(G) \setminus I$ is saturated by $M$. Since $M_A$ consists of the edges of $M$ incident to $A(G)$, $M_A$ is a minimal matching having this property. Also, $M_A$ exposes $I$, since $M$ exposes $I$.

\itemref{MAStarCoversA} By the Gallai-Edmonds decomposition theorem, $M^*$ saturates $A(G)$ by matching every vertex of it to a vertex of a distinct component of $G[D(G)]$. Since $M_A^*$ consists of the edges of $M$ incident to $A(G)$ the claimed property holds.

\itemref{ComponentHasPerfectMatching} Let $X$ be a component of $G[C(G)]$. By the Gallai-Edmonds decomposition theorem $M^*$ saturates $X$. Therefore, $M$ saturates $X \setminus I$. The vertices in $X \cap V(M_A)$ are matched to vertices of $A(G)$ and all the rest are matched within component $X$. This induces a perfect matching $M_X$ on $G[X \setminus (I \cup V(M_A))]$. Similarly, if $X$ is a component of $G[D(G)]$ with a vertex incident to $M_A^*$ then $X$ is saturated by $M^*$ and we can repeat the same argument for $X$.

\itemref{ComponentHasAlmostPerfectMatching} Let $X$ be a connected component of $G[D(G)]$ with no vertex incident to $M_A^*$. Then $M^*$ does not contain an edge joining $X$ and $A(G)$, since otherwise such an edge would be in $M_A^*$. Therefore, by the Gallai-Edmonds decomposition theorem, $M^*$ leaves exactly one vertex $v_X \in X$ exposed. This implies that $M$ saturates $X \setminus (I \cup\{v_X\})$ and exposes $X \cap (I\cup\{v_X\})$. We now proceed as above. The vertices in $X \cap V(M_A)$ are matched to vertices of $A(G)$ and all the rest are matched within the component $X$. This induces a perfect matching $M_X$ on $G[X \setminus (I \cup V(M_A)\cup\{v_X\})]$. Furthermore since $M$ is maximal and exposes $I\cup\{v_X\}$, $v_X$ is not adjacent to $I$. 

($\Leftarrow$) Suppose that $G$ contains an independent set $I$ and two matchings $M_A$ and $M_A^*$ with the claimed properties. Let $M = M_A \cup \{M_X: X$ is a connected component of $G[C(G) \cup D(G)]\}$. To conclude the proof, we will show that $M$ is a maximal matching with $\nu(G)-k$ edges. Since $V(M_X) \cap V(M_A) = \emptyset$ by definition of $M_X$, set $M$ is a union of vertex-disjoint matchings, thus a matching. We have $\Exp(M) = I \cup \{v_X~|X$ is a connected component of $G[D(G)]$ with no vertex incident to $M_A^* \}$. Since a) $v_X$ is not adjacent to $I$ for every $X$ as above, b) $I$ is an independent set, and c) vertices $v_X$ belong to distinct connected components of $G[D(G)]$, we conclude that $\Exp(M)$ is an independent set, thus $M$ is maximal.
Let $M^*$ be a maximum matching of $G$ obtained by adding to $M_A^*$ a perfect matching in
each component of $G[C(G)]$, a perfect matching of $G[D(G)\setminus V(M_A^*)]$
for each component of $G[D(G)]$ having a vertex in $V(M_A^*)$,
and a near perfect matching in each component of $G[D(G)]$ having no vertices in $V(M_A^*)$.
(A {\it near perfect} matching in a graph $H$ is a matching exposing a unique vertex.)
A component $X$ of $G[D(G)]$ is not incident to $M_A^*$ if and only if $M^*$ does not contain an edge joining $A(G)$ and $X$, if and only if $X$ contains a vertex exposed by $M^*$. Therefore, the number of these components is $\abs{\Exp(M^*)}$. We also recall that $v_X \notin I$ for every such component $X$. We conclude that $\abs{\Exp(M)}=\abs{I}+\abs{\Exp(M^*)}=2k+\abs{\Exp(M^*)}$, i.e. $\abs{M} = \abs{M^*} - k = \nu(G) - k$.
\end{proof}

Theorem \ref{thm:CharacOfMatchingGapKWithTwoMatchings} suggests Algorithm \ref{alg:recognition k-quasi-eqmWithTwoMatchings}, which decides
if the matching gap of a given graph $G$ exceeds a given integer $k$.

\begin{algorithm}[ht]
\label{alg:recognition k-quasi-eqmWithTwoMatchings}
\caption{Deciding if $\mu(G)\ge k$}

\KwIn{A graph $G = (V,E)$, a non-negative integer $k$.}
\KwOut{$\yes$ if $\mu(G) \geq k$, $\no$ otherwise.}

\BlankLine

{
    \nllabel{line1}
    Compute the Gallai-Edmonds decomposition $\set{A(G), C(G), D(G)}$  of $G$\;
    \For {every independent set $I$ of $G$ with $2k$ vertices} {
        \For {every matching $M_A^*$ of size $\abs{A(G)}$ that joins the vertices of $A(G)$ with distinct components of $D(G)$} {
            \For {every minimal matching $M_A$ that saturates $A(G) \setminus I$ and exposes $I$} {
               $GoodGuess \gets True$\;\nllabel{line5}
                \For {every connected component $X$ of $C(G)$} {
                    \If {$G[X \setminus (I \cup V(M_A))]$ does not have a perfect matching} {
                        $GoodGuess \gets False$\;
                    }
                }
                \For {every connected component $X$ of $D(G)$ with a vertex incident to $M_A^*$} {
                    \If {$G[X \setminus (I \cup V(M_A))]$ does not have a perfect matching} {
                        $GoodGuess \gets False$\;
                    }
                }
                \For {every connected component $X$ of $D(G)$ with no vertex incident to $M_A^*$} {
                    \If {not $\exists v_X \in X \setminus (N[I] \cup V(M_A))$ s.t. \\
                        ~~~~~~~~~$G[X \setminus (I \cup V(M_A)\cup\{v_X\})]$ has a perfect matching} {
                            $GoodGuess \gets False$\;
                    }
                }
                \If {$GoodGuess$} {
                 \Return~\yes \;\nllabel{line17}
                }
            }
        }
    }
    \Return~\no\;
}
\end{algorithm}

Algorithm~\ref{alg:recognition k-quasi-eqmWithTwoMatchings} leads to the following.

\begin{theorem}\label{thm:XP-mu-A(G)}
There is an $\xp$ algorithm for $\mg$ when parameterized by $\mu(G)+\abs{A(G)}$.
\end{theorem}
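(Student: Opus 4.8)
The plan is to show that Algorithm~\ref{alg:recognition k-quasi-eqmWithTwoMatchings} correctly decides whether $\mu(G)\ge k$ and that its running time is polynomial for every fixed value of $\mu(G)+\abs{A(G)}$. Correctness is essentially immediate from Theorem~\ref{thm:CharacOfMatchingGapKWithTwoMatchings}: the algorithm returns $\yes$ precisely when it finds an independent set $I$ of size $2k$ together with matchings $M_A^*$ and $M_A$ satisfying conditions \itemref{MAStarCoversA} and \itemref{MAIsMinimal}, and then verifies conditions \itemref{ComponentHasPerfectMatching} and \itemref{ComponentHasAlmostPerfectMatching} component by component (the perfect-matching and near-perfect-matching checks can each be done in polynomial time). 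Thus the algorithm returns $\yes$ if and only if the characterization of Theorem~\ref{thm:CharacOfMatchingGapKWithTwoMatchings} is met, i.e.\ if and only if $\mu(G)\ge k$. So the real content of the proof is the running-time analysis, and by Lemma~\ref{lem:XP} it suffices to bound the running time in terms of $k+\abs{A(G)}$ rather than $\mu(G)+\abs{A(G)}$.

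First I would bound the number of iterations of each of the three nested loops. The outer loop ranges over independent sets $I$ of size $2k$, of which there are at most $\binom{|V(G)|}{2k} = \bigoh(|V(G)|^{2k})$, giving a polynomial factor for fixed $k$. The middle loop ranges over matchings $M_A^*$ of size $\abs{A(G)}$ matching each vertex of $A(G)$ to a distinct component of $G[D(G)]$; the number of such matchings is at most $|V(G)|^{\abs{A(G)}}$ since each of the $\abs{A(G)}$ edges can be chosen in at most $|V(G)|$ ways. The innermost loop ranges over inclusion-wise minimal matchings $M_A$ saturating $A(G)\setminus I$ and exposing $I$; since such a matching saturates $A(G)\setminus I$ and minimality forces every edge to be incident to a distinct vertex of $A(G)\setminus I$, it consists of at most $\abs{A(G)}$ edges, and there are again at most $|V(G)|^{\abs{A(G)}}$ of them. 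Enumerating each family can itself be carried out within these bounds.

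For the body of the loops, computing the Gallai-Edmonds decomposition in line~\ref{line1} is polynomial, and each check inside the innermost loop amounts to testing, for a bounded collection of induced subgraphs, whether a perfect matching exists (for conditions \itemref{ComponentHasPerfectMatching} and the perfect-matching part of \itemref{ComponentHasAlmostPerfectMatching}); for condition \itemref{ComponentHasAlmostPerfectMatching} one additionally searches over candidate exposed vertices $v_X$ outside $N[I]\cup V(M_A)$, which adds only a linear factor. All of these are polynomial in $|V(G)|$. Multiplying the loop counts by the polynomial cost of the body yields an overall bound of the form $\bigoh(|V(G)|^{g(k+\abs{A(G)})})$ for some function $g$, so the algorithm is an $\xp$ algorithm for $\mg$ parameterized by $k+\abs{A(G)}$. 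Finally, applying Lemma~\ref{lem:XP} with $\sigma(G)=\abs{A(G)}$ upgrades this to an $\xp$ algorithm parameterized by $\mu(G)+\abs{A(G)}$, completing the proof.

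The main obstacle I expect is not the correctness, which follows cleanly from Theorem~\ref{thm:CharacOfMatchingGapKWithTwoMatchings}, but making the counting of the matchings $M_A$ and $M_A^*$ rigorous: one must argue carefully that minimality of $M_A$ bounds its size by $\abs{A(G)}$ (so that the exponent in the running time depends only on the parameter and not on $|V(G)|$), and that the enumeration of these matchings can be performed within the claimed time rather than merely that few of them exist.
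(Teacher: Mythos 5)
Your proposal is correct and follows essentially the same route as the paper's proof: correctness is delegated to Theorem~\ref{thm:CharacOfMatchingGapKWithTwoMatchings}, the three nested loops are bounded by $|V(G)|^{O(k+\abs{A(G)})}$ iterations with a polynomial-time body (your counting of the matchings $M_A$ and $M_A^*$ via minimality is the same observation the paper relies on, with marginally tighter exponents), and Lemma~\ref{lem:XP} converts the $k+\abs{A(G)}$ parameterization into the $\mu(G)+\abs{A(G)}$ one.
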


\begin{proof}
First, note that Algorithm \ref{alg:recognition k-quasi-eqmWithTwoMatchings} is an $\xp$ algorithm for $\mg$ when parameterized by $k+\abs{A(G)}$.
Its correctness follows from Theorem~\ref{thm:CharacOfMatchingGapKWithTwoMatchings}.
To estimate the running time, note that the Gallai-Edmonds decomposition of a graph $G$ can be computed in polynomial time~\cite{LP09}.
Furthermore, observe that one iteration of the third for loop (lines~\ref{line5}--\ref{line17}) can be computed in polynomial time. Therefore, neglecting polynomial factors, the running time of the algorithm is proportional to the number of iterations of the third for loop. The outer loop is executed at most $|V(G)|^{2k}$ times. Each of the two inner loops is executed at most $|E(G)|^{\abs{A(G)}} < |V(G)|^{2 \abs{A(G)}}$ times. Therefore, the third loop is executed at most $|V(G)|^{2k+4\abs{A(G)}} \leq |V(G)|^{4(k+\abs{A(G)})}$ times.
Lemma~\ref{lem:XP} implies that there is an $\xp$ algorithm for $\mg$ when parameterized by $k+\abs{A(G)}$, as claimed.
\end{proof}

We now show that even when the set $A(G)$ is big, we can have an efficient algorithm if every vertex of $A(G)$ is connected only to a small number of connected components in $G[D(G)]$.

\begin{theorem} \label{thm:CharacOfMatchingGapK}
Let $G$ be a graph, $M^*$ a maximum matching of $G$ and $k$ a non-negative integer. Then $\mu(G) \geq k$ if and only if
there exist two disjoint sets $I,Z \subseteq V(M^*)$ such that
\begin{enumerate}[i)]
\item \label{itm:SizeOfI} $\abs{I}=2k$,
\item \label{itm:IZUIndependent} $I \cup Z \cup (\Exp(M^*) \setminus N(I))$ is an independent set,
\item \label{itm:SizeOfZ} $\abs{Z}=\abs{\Exp(M^*) \cap N(I)}$,
\item \label{itm:ZAtmostOneNeighbour} every vertex of $Z$ has at most one neighbour in $\Exp(M^*) \cap N(I)$, and
\item \label{itm:HHasPerfectMatching} the graph $H = G \setminus (I \cup Z \cup (\Exp(M^*) \setminus N(I)))$ has a perfect matching.
\end{enumerate}
\end{theorem}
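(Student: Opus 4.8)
The plan is to reduce everything to the elementary equivalences
$\mu(G)\ge k \iff \beta(G)\le \nu(G)-k \iff G$ has a maximal matching of size \emph{exactly} $\nu(G)-k$, the last step being Corollary~\ref{cor:matchings} together with Lemma~\ref{lem:set-matchings}. Both directions then amount to translating between such a maximal matching and the sets $I,Z$ measured against the prescribed maximum matching $M^*$. Throughout I write $W:=\Exp(M^*)\cap N(I)$, so that the set deleted in condition (v) is $R:=I\cup Z\cup(\Exp(M^*)\setminus N(I))$, and I record at the outset that $I$, $Z$ and $\Exp(M^*)\setminus N(I)$ are pairwise disjoint, using $I,Z\subseteq V(M^*)$.

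For the easy direction, I would take a perfect matching $M_H$ of $H$, guaranteed by (v). Every edge of $M_H$ lies inside $V(G)\setminus R$, so viewed in $G$ the matching $M_H$ exposes exactly $R$; since $R$ is independent by (ii), $M_H$ is a maximal matching of $G$. Its size is $(\abs{V(G)}-\abs{R})/2$, and using disjointness together with $\abs{I}=2k$ from (i) and $\abs{Z}=\abs{W}$ from (iii) one computes $\abs{R}=2k+\abs{\Exp(M^*)}=2k+(\abs{V(G)}-2\nu(G))$, whence $\abs{M_H}=\nu(G)-k$. Thus $\beta(G)\le \nu(G)-k$ and $\mu(G)\ge k$. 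Note that condition (iv) is not needed here; it is forced only in the converse.

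For the other direction, assume $\mu(G)\ge k$ and fix a maximal matching $M$ with $\abs{M}=\nu(G)-k$. I would analyse $D=M\triangle M^*$, whose components are even alternating cycles, even alternating paths, and (since $M^*$ is maximum while $\abs{M^*}-\abs{M}=k$) exactly $k$ $M$-augmenting paths; each augmenting path has both endpoints in $\Exp(M)\cap V(M^*)$, and each even path joins a vertex of $F_2:=\Exp(M^*)\cap V(M)$ to a vertex of $\Exp(M)\cap V(M^*)$. Let $I$ be the $2k$ endpoints of the augmenting paths, giving (i); since $I\subseteq\Exp(M)$, the set $I$ is independent, and because $\Exp(M)$ is independent no vertex of $\Exp(M)\cap\Exp(M^*)$ lies in $N(I)$, forcing $W\subseteq F_2$. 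I would then build the target maximal matching $\tilde M$ by keeping $M$ on the cycles and augmenting paths and rerouting each even path $P$ according to its $F_2$-endpoint: if that endpoint lies outside $N(I)$, switch $P$ to its $M^*$-state, exposing the endpoint (which then sits in $\Exp(M^*)\setminus N(I)$); if it lies in $N(I)$, keep the $M$-state and place the opposite endpoint of $P$ into $Z$. Switching an even path preserves its number of edges, so $\abs{\tilde M}=\abs{M}=\nu(G)-k$; one checks that $\Exp(\tilde M)=R$ and $\abs{Z}=\abs{W}$, and taking $M_H=\tilde M$ yields (v). Crucially this construction is carried out against the \emph{given} $M^*$, rather than replacing $M^*$ by a maximum matching containing $V(M)$ as in Theorem~\ref{thm:CharacOfMatchingGapKWithTwoMatchings}.

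The hard part will be verifying conditions (ii) and (iv). Independence of $R$ is automatic except possibly for edges between $Z$ and the relocated endpoints of $\Exp(M^*)\setminus N(I)$ that lie in $F_2$, and the naive rerouting rules out neither such an edge nor the possibility that some vertex of $Z$ has two neighbours in $W$. I expect resolving this to require choosing $Z$ (equivalently, the rerouting) carefully rather than path-by-path in arbitrary order: for instance, processing the vertices of $W$ along vertex-disjoint \emph{shortest} alternating paths and again exploiting the independence of $\Exp(M)$ to forbid the offending edges, or recasting the choice of $Z$ as a system of distinct representatives and invoking a Hall-type argument so that each chosen representative meets a single vertex of $W$. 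Making independence (ii) and the degree bound (iv) hold simultaneously while preserving $\abs{Z}=\abs{W}$ and the perfect matching of $H$ is the technical heart of the proof.
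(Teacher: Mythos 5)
Your setup is sound, and your forward (easy) direction is the same as the paper's. For the converse you also have the right skeleton --- decompose $M\triangle M^*$, let $I$ be the $2k$ endpoints of the $k$ $M$-augmenting paths, and for each even alternating path $Q_j$ write $t_j$ for its endpoint in $\Exp(M^*)\cap V(M)$ and $z_j$ for its endpoint in $\Exp(M)\cap V(M^*)$ --- but you stop exactly where the proof has to be finished: you concede that you cannot verify conditions (ii) and (iv) and speculate that a shortest-path or Hall-type refinement of the rerouting will be required. That is a genuine gap, and the speculation points in the wrong direction. The missing step is a one-line observation: if $z_j$ were adjacent to $t_i$ for any $i\neq j$, then the path $t_i\,z_j\,Q_j\,t_j$ would be $M^*$-augmenting (its first edge $t_iz_j$ is not in $M^*$ since $t_i\in\Exp(M^*)$, the edge of $Q_j$ at $z_j$ is in $M^*$, and both ends $t_i,t_j$ are exposed by $M^*$), contradicting the maximality of $M^*$. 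This single fact already gives (iv) (each $z_j\in Z$ can see at most $t_j$ inside $W$), and it closes the only possible violation of (ii) in your rerouted matching $\tilde M$: an edge from some $z_j\in Z$ to a relocated exposed vertex $t_i$ would force $i\neq j$, because $t_j\in N(I)$ whenever $z_j\in Z$ while relocated vertices satisfy $t_i\notin N(I)$. Every other pair in $R$ is handled, as you note, by the independence of $\Exp(M)$ and of $\Exp(M^*)$. So the ``technical heart'' you were bracing for is not there; no careful ordering of the switches and no system of distinct representatives is needed.

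For comparison, the paper avoids the rerouting altogether: it chooses the maximal matching $M$ of size $\nu(G)-k$ so as to minimize $\abs{M\triangle M^*}$. Minimality eliminates the alternating cycles and, more importantly, forces every $t_i$ into $N(I)$ (otherwise switching $Q_i$ to its $M^*$-state yields another maximal matching of the same size with strictly smaller symmetric difference), so $T=\Exp(M^*)\cap N(I)$ exactly and $M$ itself witnesses all five conditions; the same augmenting-path observation then supplies (iv). Your path-by-path switching is a legitimate substitute for that extremal choice, but only once the observation above is in place; as written, the proposal does not establish (ii) or (iv).
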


\begin{proof}

($\Leftarrow$)
Suppose that there are two disjoint sets $I, Z \subseteq V(M)$ having the claimed properties. Let $T=\Exp(M^*) \cap N(I)$ and $U=\Exp(M^*) \setminus N(I)$, and let $M$ be a perfect matching of $H$. Clearly, $\abs{\Exp(M^*)} = \abs{T} \cup \abs{U}$. $M$ is a matching of $G$ such that $\Exp(M)=I \cup Z \cup U$. We note that by Property~\itemref{IZUIndependent} this is an independent set, thus $M$ is maximal in $G$. Since $I,Z$, and $U$ are pairwise disjoint, we have $\abs{\Exp(M)}= \abs{I}+\abs{Z}+\abs{U}$. We also note that by Property \itemref{SizeOfZ} we have $\abs{Z}=\abs{T}$. Therefore, $\abs{\Exp(M)} = \abs{\Exp(M^*)} + 2k$, i.e., $M$ is a maximal matching of size $\nu(G) - k$, implying $\mu(G) \geq k$.

($\Rightarrow$) Suppose $\mu(G) \geq k$. Then $G$ contains at least one maximal matching with $\nu(G)-k$ edges. Let $M$ be a maximal matching with $\nu(G)-k$ edges such that the symmetric difference $\Delta = M^* \triangle M$ is smallest possible. Since $M^*$ is maximum, there are no \hbox{$M^*$-augmenting} paths in $G$. Furthermore, $\Delta$ does not contain alternating cycles, since in this case one can alternate $M$ using such a cycle to obtain a maximal matching $M'$ such that $M^* \triangle M'$ is smaller than $\Delta$. Therefore, $\Delta$ consists of $k$ many \hbox{$M$-augmenting paths} $\pp = \set{P_1, P_2, \ldots, P_k}$ and $\ell\ge 0$ many \hbox{$M^*$-alternating} even length paths $\qq=\set{Q_1, \ldots, Q_\ell}$ where the paths of $\pp \cup \qq$ are pairwise vertex-disjoint. Let $I$ be the set of endpoints of the paths in $\pp$, and let $z_i$ (resp. $t_i$) be the endpoint of $Q_i$ in $V(M^*)$ (resp. $\Exp(M^*)$), for every $i \in [\ell]$. Finally, let $Z=\set{z_i | i \in [\ell]}$, $T=\set{t_i | i \in [\ell]}$ and $U = \Exp(M) \cap \Exp(M^*)$.

\itemref{SizeOfI} Clearly, $I$ and $Z$ are disjoint subsets of $V(M^*)$ and $I$ has $2k$ vertices.

\itemref{IZUIndependent} Since $\Exp(M) = U \cup I \cup Z$, this is an independent set. On the other hand $U = \Exp(M^*) \setminus T = \Exp(M^*) \setminus N(I)$.

\itemref{SizeOfZ}, \itemref{ZAtmostOneNeighbour} Clearly, $\abs{Z}=\abs{T}$. Next, we observe that $z_i$ is not adjacent to a vertex $t_j \in T$ for $j \neq i$. Indeed, in this case $Q_i$ extended with the edge $z_i t_j$ would be an \hbox{$M^*$-augmenting} path. Therefore, every vertex of $Z$ has at most one neighbour in $T$. It is therefore sufficient to show that $T = N(I) \cap \Exp(M^*)$. Let $v \in N(I) \cap \Exp(M^*)$ and suppose $v \notin T$. Noting that $\Exp(M^*)=U\cup T$, the assumption $v \notin T$ implies that $v \in U$; contradiction since $v$ has a neighbor in $I$ but $U\cup I$ is an independent set as it is exposed by the maximal matching $M$. Therefore, $v \in T$. Conversely, let $t_i \in T$ for some $i \in [\ell]$. Since $t_i \in \Exp(M^*)$, it remains to show that $t_i \in N(I)$. Now suppose for a contradiction that $t_i \notin N(I)$. Then, one can alternate $M$ using the path $Q_i$ to obtain a maximal matching $M'$ such that $\abs{M}=\abs{M'}$ and $M' \triangle M^*$ is smaller than $\Delta$, a contradiction. Therefore, $t_i \in N(I)$ as claimed.

\itemref{HHasPerfectMatching} Since $\Exp(M) = U \cup I \cup Z$, $M$ is a perfect matching of $G \setminus (I \cup Z \cup U)$.
\end{proof}

Theorem \ref{thm:CharacOfMatchingGapK} suggests Algorithm \ref{alg:recognition k-quasi-eqm-two}, which decides
if the matching gap of a given graph $G$ exceeds a given integer $k$.

\begin{algorithm}[H]
\label{alg:recognition k-quasi-eqm-two}
\caption{Deciding if  $\mu(G)\ge k$}

\KwIn{A graph $G = (V,E)$, a non-negative integer $k$.}
\KwOut{$\yes$ if $\mu(G) \geq k$, $\no$ otherwise.}

\BlankLine

{
    \nllabel{line1}
    $M^* \gets $ a maximum matching of $G$\;
    \For {every independent set $I \subseteq V(M^*)$ of $G$ with $2k$ vertices} {
        ${\cal Z} \gets \{z \in V(M^*) \setminus N[I]~|$ \\
~~~~~~~~~~~$z$ has at most one neighbour in $\Exp(M^*) \cap N(I)$,\\
~~~~~~~~~~~$z$ has no neighbour in $\Exp(M^*) \setminus N(I)\}$\;
        \For {every independent set $Z$ of ${\cal Z}$ with $\abs{\Exp(M^*) \cap N(I)}$ vertices} {
            \If {$G \setminus (I \cup Z \cup (\Exp(M^*) \setminus N(I)))$ has a perfect matching} {
                \Return~\yes
            }
        }
    }
    \Return~\no
}
\end{algorithm}

Algorithm \ref{alg:recognition k-quasi-eqm-two} leads to the following.

\begin{theorem}\label{thm:XP-mu-A(G)}
There is an $\xp$ algorithm for $\mg$ when parameterized by $\mu(G)+\rho(G)$.
\end{theorem}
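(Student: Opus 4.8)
The plan is to show Algorithm~\ref{alg:recognition k-quasi-eqm-two} is correct and runs in polynomial time for each fixed value of $\mu(G)+\rho(G)$, then invoke Lemma~\ref{lem:XP} to upgrade the parameter from $k$ to $\mu(G)$. First I would argue correctness by checking that the algorithm faithfully searches for the combinatorial objects guaranteed by Theorem~\ref{thm:CharacOfMatchingGapK}. The outer loop enumerates every independent set $I \subseteq V(M^*)$ of size $2k$ (Property~\itemref{SizeOfI}); for each such $I$, the set ${\cal Z}$ is precisely the set of candidate vertices that can belong to $Z$, namely vertices of $V(M^*)\setminus N[I]$ having at most one neighbour in $\Exp(M^*)\cap N(I)$ and no neighbour in $\Exp(M^*)\setminus N(I)$. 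These restrictions encode exactly the constraints imposed by Properties~\itemref{IZUIndependent} and~\itemref{ZAtmostOneNeighbour}: membership in $V(M^*)\setminus N[I]$ ensures $Z$ is disjoint from $I$ and not adjacent to $I$, and the two neighbourhood conditions guarantee that $I\cup Z\cup(\Exp(M^*)\setminus N(I))$ remains independent and that the matching-one condition on $Z$ holds. The inner loop then picks an independent subset $Z\subseteq{\cal Z}$ of the required cardinality $\abs{\Exp(M^*)\cap N(I)}$ (Property~\itemref{SizeOfZ}), and the final perfect-matching test on $H=G\setminus(I\cup Z\cup(\Exp(M^*)\setminus N(I)))$ verifies Property~\itemref{HHasPerfectMatching}. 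Thus the algorithm returns \yes\ if and only if the sets $I,Z$ of Theorem~\ref{thm:CharacOfMatchingGapK} exist, which by that theorem is equivalent to $\mu(G)\ge k$.

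Next I would bound the running time as a function of $k$ and $\rho(G)$. Computing $M^*$ and each perfect-matching test take polynomial time, so up to polynomial factors the cost is the number of iterations, i.e.\ the number of pairs $(I,Z)$ examined. The number of choices of $I$ is at most $\binom{|V(G)|}{2k}\le |V(G)|^{2k}$. The crucial point is bounding the number of candidate sets $Z$: the key observation is that $\abs{\Exp(M^*)\cap N(I)}$ is small relative to $\rho(G)$, so that $Z$ ranges over subsets of ${\cal Z}$ of bounded size. I would therefore establish that $\abs{\Exp(M^*)\cap N(I)}$ is $O(k\cdot\rho(G))$, so the number of choices for $Z$ is at most $|V(G)|^{O(k\rho(G))}$, giving a total of $|V(G)|^{O(k\rho(G))}$ iterations and hence an $\xp$ algorithm for $\mg$ parameterized by $k+\rho(G)$. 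Finally, applying Lemma~\ref{lem:XP} with $\sigma(G)=\rho(G)$ yields an $\xp$ algorithm when parameterized by $\mu(G)+\rho(G)$.

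The main obstacle is the bound on $\abs{\Exp(M^*)\cap N(I)}$ in terms of $\rho(G)$. The set $\Exp(M^*)$ consists of exposed vertices of the maximum matching, which by the Gallai-Edmonds structure (Theorem~\ref{thm:GallaiEdmonds}) lie one per exposed component of $G[D(G)]$; a vertex of $I$ can only be adjacent to such an exposed vertex through its own neighbourhood structure. The plan is to use the fact that each vertex of $I$ lies in some component of $G$ and connects to boundedly many components of $G[D(G)]$ as measured by $\rho(G)$ when it is in $A(G)$, but vertices of $I$ in $D(G)$ or $C(G)$ must be handled separately. I expect the cleanest route is to argue directly that every vertex of $\Exp(M^*)$ adjacent to $I$ arises because some vertex of $I$ (lying in $A(G)$, by the Gallai-Edmonds property that exposed vertices are in $D(G)$ and only $A(G)$ vertices join distinct $D(G)$ components) reaches a bounded number of $D(G)$-components. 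Making this counting rigorous, i.e.\ showing $\abs{\Exp(M^*)\cap N(I)}\le 2k\cdot\rho(G)$ or a similar bound, is the delicate step; once it is in hand, the enumeration bound and the reduction via Lemma~\ref{lem:XP} are routine.
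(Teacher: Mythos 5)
Your proposal is correct and follows essentially the same route as the paper: correctness of Algorithm~\ref{alg:recognition k-quasi-eqm-two} via Theorem~\ref{thm:CharacOfMatchingGapK}, the bound $\abs{\Exp(M^*)\cap N(I)}\le 2k\rho(G)$ to control the enumeration of $Z$, and Lemma~\ref{lem:XP} to pass from parameter $k+\rho(G)$ to $\mu(G)+\rho(G)$. The counting step you flag as delicate is exactly the paper's one-line argument: since $\Exp(M^*)\subseteq D(G)$ and each component of $G[D(G)]$ contains at most one $M^*$-exposed vertex, any vertex of $I$ with more than one neighbour in $\Exp(M^*)$ must lie in $A(G)$ and hence has at most $\rho(G)$ such neighbours, giving $\abs{\Exp(M^*)\cap N(I)}\le \abs{I}\cdot\rho(G)=2k\rho(G)$.
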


\begin{proof}
First, note that Algorithm \ref{alg:recognition k-quasi-eqm-two} is an $\xp$ algorithm for $\mg$ when parameterized by $k+\rho(G)$.
Its correctness follows from Theorem~\ref{thm:CharacOfMatchingGapK}.
To estimate the running time, note that the number of iterations of the inner for loop constitutes the dominant part of the running time
and every other step takes time polynomial in the input. Therefore, neglecting polynomial factors, the running time of the algorithm is proportional to the number of iterations of the inner loop. We observe that $\abs{\Exp(M^*) \cap N(I)} \leq 2k \rho(G)$. Indeed, every vertex of $I$ that has more than one neighbour in $\Exp(M^*)$ has to be an element of $A(G)$ and the number of these neighbours is at most $\rho(G)$. Therefore, $\abs{Z} \leq 2k \rho(G)$ and the number of iterations of the inner loop is at most $|V(G)|^{2k} \cdot |V(G)|^{2k \rho(G)} = |V(G)|^{2k(\rho(G)+1)} \le |V(G)|^{2(k+\rho(G))^2}$.
Lemma~\ref{lem:XP} implies that there is an $\xp$ algorithm for $\mg$ when parameterized by $k+\rho(G)$, as claimed.
\end{proof}

\section{Equimatchable sets}\label{sec:EqmSets}
In this section, we introduce another graph parameter that measures how far a graph is from being equimatchable.
The parameter relies on the notion of equimatchable sets of a graph, which we now define.

We say that a set $S$ of vertices in $G$ is \emph{matching covered} if $S \subseteq V(M)$ for some matching $M$ of $G$. Note that if a set $S$ is matching covered, then there exists a maximal matching of $G$ covering $S$.

\begin{definition}\label{def:eta}
We say that a set $S\subseteq V(G)$ is \emph{equimatchable in} $G$ if all maximal matchings of $G$ covering $S$ are of the same size.
We will refer to the minimum size of an equimatchable set of $G$ as the \emph{equimatchability defect} of $G$ and denote it by $\eta(G)$.
\end{definition}

Any set $S$ that is not matching covered is vacuously equimatchable. As an example of a matching covered set that is not equimatchable, take $G$ to be the $4$-vertex path $v_1 v_2 v_3 v_4$, and let $S$ be any subset of $\set{v_2, v_3}$. Set $S$ is matching covered but not equimatchable as it is covered by two maximal matchings of different sizes.

Since $V(G)$ is always equimatchable in $G$, the equimatchability defect of $G$ is well defined. By Lemma~\ref{lem:set-matchings}, all inclusion-wise maximal matching covered sets of $G$ are of the same size (namely, $2\nu(G)$). This implies that a set $S\subseteq V(G)$ is equimatchable in $G$ if and only if all maximal matchings of $G$ covering $S$ are maximum.

Since $\eta(G) \ge 0$ for all graphs $G$ and a graph $G$ is equimatchable if and only if $\eta(G) = 0$, the equimatchability defect of $G$ measures how far the graph is from being equimatchable. In the following proposition we collect some basic properties of equimatchable sets and of the
equimatchability defect:
\begin{proposition}\label{prop:equimatchable-basic}
Let $G$ be a graph. Then:
\begin{enumerate}[(1)]
\item\label{lbl:EquimSetMonotonicity} If $S \subseteq S' \subseteq V(G)$ and $S$ is equimatchable in $G$, then so is $S'$.
\item\label{lbl:EquimSetThreeTrivial} The following three conditions are equivalent:
\begin{enumerate}
\item $G$ is equimatchable.
\item $\emptyset$ is equimatchable in $G$ (equivalently, $\eta(G)=0$).
\item Every subset of $V(G)$ is equimatchable in $G$.
\end{enumerate}
\end{enumerate}
\end{proposition}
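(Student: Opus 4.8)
The plan is to prove part \itemref{lbl:EquimSetMonotonicity} directly from the definition of equimatchability of a set, and then obtain part \itemref{lbl:EquimSetThreeTrivial} by combining part \itemref{lbl:EquimSetMonotonicity} with the definitions and the characterization observed right after Definition~\ref{def:eta} (namely that $S$ is equimatchable in $G$ if and only if every maximal matching of $G$ covering $S$ is maximum).

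For the monotonicity statement, I would argue via containment of the relevant families of maximal matchings. Since $S \subseteq S'$, any maximal matching $M$ of $G$ covering $S'$ automatically covers $S$; hence the collection of maximal matchings covering $S'$ is contained in the collection of maximal matchings covering $S$. As $S$ is assumed equimatchable, all matchings in the latter collection have the same size, so the same holds for the former, proving $S'$ equimatchable. (If $S'$ happens not to be matching covered, the conclusion holds vacuously by the convention that a non-matching-covered set is equimatchable.) Equivalently, one can phrase this through the ``maximum matching'' characterization: every maximal matching covering $S'$ also covers $S$ and is therefore maximum, so $S'$ is equimatchable.

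For the three-way equivalence, I would establish the implications in a short cycle. The equivalence of \emph{(a)} and \emph{(b)} is immediate from the definitions: every matching of $G$ covers the empty set, so the maximal matchings covering $\emptyset$ are precisely \emph{all} maximal matchings of $G$; thus $\emptyset$ is equimatchable in $G$ exactly when all maximal matchings have the same size, i.e.\ exactly when $G$ is equimatchable. The parenthetical ``$\eta(G)=0$'' follows because $\emptyset$ is the unique set of size $0$, so $\eta(G)=0$ holds iff $\emptyset$ is equimatchable. For \emph{(b)} $\Rightarrow$ \emph{(c)}, I would invoke part \itemref{lbl:EquimSetMonotonicity}: since $\emptyset \subseteq S'$ for every $S' \subseteq V(G)$, monotonicity yields that every such $S'$ is equimatchable. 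Finally, \emph{(c)} $\Rightarrow$ \emph{(b)} is trivial, as $\emptyset \subseteq V(G)$.

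I do not expect any substantive obstacle here: the proposition follows formally from the definition together with the elementary observation that enlarging a set can only shrink the family of maximal matchings covering it. The only point requiring a little care is the vacuous case in part \itemref{lbl:EquimSetMonotonicity} when $S'$ is not matching covered, which is dispatched by the stated convention that such sets count as equimatchable.
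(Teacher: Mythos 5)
Your proof is correct and follows essentially the same route as the paper: the paper proves part \emph{(1)} by the contrapositive of your containment argument (two maximal matchings of different sizes covering $S'$ would also cover $S$), and derives the three-way equivalence exactly as you do, via the definition for \emph{(a)}$\Leftrightarrow$\emph{(b)} and monotonicity for \emph{(b)}$\Leftrightarrow$\emph{(c)}. No issues.
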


\begin{proof}
\eqref{lbl:EquimSetMonotonicity}
Suppose that there are two vertex sets $S, S'$ of $G$ such that $S \subseteq S'$, $S$ is equimatchable and $S'$ is not equimatchable. Then, $S'$ is covered by two maximal matchings of $G$, say $M_1$ and $M_2$, of different sizes. Since $M_1$ and $M_2$ also cover $S$, we conclude that $S$ is not equimatchable. A contradiction.

\eqref{lbl:EquimSetThreeTrivial}
By definition $G$ is equimatchable if and only if every two maximal matchings in $G$ are of the same size. This is clearly equivalent to requiring that $\emptyset$ is equimatchable in $G$, which establishes the equivalence between \emph{(a)} and \emph{(b)}.
The equivalence between \emph{(b)} and \emph{(c)} follows from \eqref{lbl:EquimSetMonotonicity}.
\end{proof}

It might be worth pointing out two connections between equimatchable sets and the matching gap of the graph. First, in Section~\ref{sec:EqmSets-bounds} we will establish an inequality relating the minimum size of an equimatchable set in a graph with its matching gap (Theorem~\ref{thm:bound2}). Second, the characterization of almost equimatchable graphs given by Theorem~\ref{thm: gamma 1} can be formulated in terms of equimatchable sets using the following equivalent reformulation of Lemma~\ref{lem: almost equim < } (see Figure~\ref{fig:no induced P4}).

\begin{lemma}
Let $G$ be a graph. Then $\mu(G)\leq 1$ if and only if there exists an integer $k$ such that
for every maximal matching with an augmenting $P_4$ in $G$, say $P = uwyv$,
the set $S = N(\{u,v\})\setminus\{w,y\}$ is equimatchable in the graph $G-V(P)$ and all maximal matchings of $G-V(P)$ covering
$S$ have size $k$.
\end{lemma}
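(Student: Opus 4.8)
The plan is to reduce the statement to Lemma~\ref{lem: almost equim < }, which already characterizes the condition $\mu(G)\le 1$ as the requirement that all maximal matchings with an augmenting $P_4$ have the same size. The bridge between the two formulations will be a size-shifting correspondence between the maximal matchings of $G$ admitting $uwyv$ as an augmenting $P_4$ and the maximal matchings of $G-V(P)$ covering $S=N(\{u,v\})\setminus\{w,y\}$.

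First I would record the correspondence. Fix a maximal matching $M$ of $G$ with an augmenting $P_4$, $P=uwyv$; thus $u,v\in \Exp(M)$, $wy\in M$, and $uw,yv\in E(G)$. Since $M$ is maximal, $\Exp(M)$ is independent, so in particular $u$ and $v$ are non-adjacent, and the only edge of $M$ meeting $V(P)$ is $wy$. I claim that the maps $M_0\mapsto M_0\setminus\{wy\}$ and $M'\mapsto M'\cup\{wy\}$ are mutually inverse bijections, each shifting size by one, between the maximal matchings of $G$ having $uwyv$ as an augmenting $P_4$ and the maximal matchings of $G-V(P)$ covering $S$. The decisive observation is that every neighbour of $u$ or of $v$ other than $w,y$ belongs to $S$ by definition; consequently, in $G-V(P)$ the only way to keep both $u$ and $v$ exposed while remaining maximal is to cover all of $S$.

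Then I would check the two directions of the bijection. For the forward map, $M\setminus\{wy\}$ is a matching of $G-V(P)$ whose exposed set there is $\Exp(M)\setminus\{u,v\}$, an independent set as a subset of $\Exp(M)$, so it is maximal; and it covers $S$ because each vertex of $S$ is adjacent to an exposed vertex of $M$, hence is itself covered by $M$ via an edge other than $wy$ (and it lies in $G-V(P)$, being neither $w,y$ nor a neighbour of a non-adjacent pair $u,v$). For the backward map, given a maximal matching $M'$ of $G-V(P)$ covering $S$, the set $M'\cup\{wy\}$ is a matching of $G$ whose exposed set is the exposed set of $M'$ together with $\{u,v\}$; this is independent because $u,v$ are non-adjacent and all of their neighbours in $G-V(P)$ lie in $S$ and are thus covered by $M'$. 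Hence $M'\cup\{wy\}$ is maximal and $uwyv$ remains an augmenting $P_4$ for it.

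With the correspondence established, the equivalence follows quickly, after dispatching the degenerate case in which $G$ has no maximal matching with an augmenting $P_4$: there $\mu(G)=0$ by Corollary~\ref{cor:equim} and the right-hand condition holds vacuously. For the forward implication, if $\mu(G)\le 1$ then by Lemma~\ref{lem: almost equim < } all maximal matchings with an augmenting $P_4$ share a common size $m$; the correspondence then forces every maximal matching of $G-V(P)$ covering $S$ to have size $m-1$, so $S$ is equimatchable in $G-V(P)$ and $k:=m-1$ works uniformly over all pairs $(M,P)$. Conversely, if such a $k$ exists, then for any maximal matching $M$ with augmenting $P_4$ $P=uwyv$ the matching $M\setminus\{wy\}$ covers $S$ and therefore has size $k$, whence $|M|=k+1$; so all maximal matchings with an augmenting $P_4$ have the same size and Lemma~\ref{lem: almost equim < } yields $\mu(G)\le 1$. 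I expect the only delicate point to be verifying that covering $S$ is \emph{exactly} the condition under which the single edge $wy$ can be added back without destroying maximality, that is, that $S$ captures precisely the neighbours of $u$ and $v$ that a maximal matching leaving $u,v$ exposed must saturate.
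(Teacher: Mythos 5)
Your proof is correct. The paper states this lemma without proof, presenting it as an evident reformulation of Lemma~\ref{lem: almost equim < }; your argument supplies exactly the intended bridge, namely the size-shifting bijection $M_0\mapsto M_0\setminus\{wy\}$ between maximal matchings of $G$ admitting $uwyv$ as an augmenting $P_4$ and maximal matchings of $G-V(P)$ covering $S$, followed by the reduction to that lemma.
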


\subsection{Hardness of computing the equimatchability defect}\label{sec:EqmSets-hardness}

\newcommand{\ed}{\textsc{EquimatchabilityDefect}}
\begin{sloppypar}
In the $\ed$ problem, the input is a graph $G$ and the objective is to determine $\eta(G)$.
In this section we develop a hitting set formulation of the equimatchability defect of a graph.
This formulation will form the basis for proving that $\ed$ is \apx-hard.
\end{sloppypar}

A {\it hypergraph} ${\cal H}$ is a pair $(V, {\cal E}) $ where $V = V({\cal H})$ is a finite set of {\it vertices} and ${\cal E} = E({\cal H})$ is a set of subsets of $V$, called {\it hyperedges}. Given a positive integer $k$, a hypergraph ${\cal H}$ is said to be {\it $k$-uniform} if $\abs{e} = k$ for all $e\in E({\cal H})$. A subset $S$ of $V({\cal H})$ is a {\it hitting set} of ${\cal H}$ if $e\cap S\neq\emptyset$ for all hyperedges $e\in E({\cal H})$. We will now show that to every graph $G$ one can associate a hypergraph with vertex set $V(G)$ such that a subset $S$ of vertices of $G$ is equimatchable if and only if $S$ is a hitting set of the derived hypergraph.
To define the hypergraph, we need one more definition. We say that a maximal matching in a graph $G$ is \emph{second best} if it is of size $\nu(G)-1$.
For a graph $G$, let $\Exp_2(G)$ be the hypergraph with vertex set $V(G)$ and hyperedge set $\{ \Exp(M) : M \text{ is a second best matching of } G\}$.

Note that the hypergraph $\Exp_2(G)$ is $(\abs{V(G)}-2\nu(G) +2)$-uniform. (If $G$ is equimatchable, then $\Exp_2(G)$ has no hyperedges.)
In particular, $\Exp_2(G)$ is $2$-uniform if and only if $G$ admits a perfect matching.

\begin{proposition}\label{prop:hitting-set}
A subset $S$ of vertices of a graph $G$ is equimatchable if and only if $S$ is a hitting set of the hypergraph $\Exp_2(G)$.
\end{proposition}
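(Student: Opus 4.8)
The plan is to translate both sides of the claimed equivalence into statements about matchings that cover $S$, and then to bridge them with Lemma~\ref{lem:set-matchings}. Recall from the discussion following Definition~\ref{def:eta} that $S$ is equimatchable in $G$ if and only if every maximal matching of $G$ covering $S$ is maximum; equivalently, \emph{no} non-maximum maximal matching covers $S$. On the other side, $S$ is a hitting set of $\Exp_2(G)$ precisely when it meets every hyperedge $\Exp(M)$, with $M$ ranging over the second best matchings of $G$. I would pass to the negation: $S$ fails to be a hitting set if and only if some hyperedge $\Exp(M)$ is disjoint from $S$, i.e.\ $S \subseteq V(M)$, which says exactly that $M$ is a second best matching covering $S$. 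Hence $S$ is a hitting set of $\Exp_2(G)$ if and only if no second best matching covers $S$.

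With these two reformulations in hand, the proposition reduces to the equivalence: no non-maximum maximal matching covers $S$ if and only if no second best matching covers $S$. The forward implication is immediate, since every second best matching is in particular a non-maximum maximal matching; thus if $S$ is equimatchable, then it is automatically a hitting set of $\Exp_2(G)$.

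For the reverse implication I would argue by contraposition. Suppose $S$ is not equimatchable, so there is a non-maximum maximal matching $M$ covering $S$, with $\abs{M}\le\nu(G)-1$. Applying Lemma~\ref{lem:set-matchings} with $k=\nu(G)-1\in[\abs{M},\nu(G)]$ yields a maximal matching $M'$ of size exactly $\nu(G)-1$ with $V(M)\subseteq V(M')$. Then $M'$ is a second best matching, and since $S\subseteq V(M)\subseteq V(M')$ it covers $S$; equivalently $\Exp(M')\cap S=\emptyset$, so $S$ misses the hyperedge $\Exp(M')$ and is not a hitting set of $\Exp_2(G)$. This establishes the contrapositive and completes the proof.

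The only genuine content lies in the step that, starting from an arbitrary non-maximum maximal matching covering $S$, produces a second best matching still covering $S$; everything else is definition chasing on the meanings of ``equimatchable'' and ``hitting set.'' I expect this step to be essentially painless here, since Lemma~\ref{lem:set-matchings} already guarantees a maximal matching of any prescribed intermediate size $k\in[\abs{M},\nu(G)]$ covering $V(M)$, and in particular one of size $\nu(G)-1$. Thus the main obstacle has been dispatched in advance by that lemma, and no separate augmenting-path analysis is needed in the proof of the proposition itself.
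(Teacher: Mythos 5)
Your proposal is correct and follows essentially the same route as the paper: both reduce the statement to ``$S$ is a hitting set of $\Exp_2(G)$ iff no second best matching covers $S$,'' and both use Lemma~\ref{lem:set-matchings} to promote an arbitrary non-maximum maximal matching covering $S$ to a second best one covering $S$. The only cosmetic difference is that you route the equimatchability side through the reformulation ``every maximal matching covering $S$ is maximum'' (already established after Definition~\ref{def:eta}), while the paper works directly with two maximal matchings of different sizes covering $S$.
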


\begin{proof}
Let $S \subseteq V(G)$. $S$ is a hitting set of the hypergraph $\Exp_2(G)$ if and only if for every second best matching $M$ of $G$, $S\cap \Exp(M) \neq \emptyset$.
This is equivalent to the property that $S$ is not covered by any second best matching of $G$.
We will now show that this last property is equivalent to $S$ being an equimatchable set in $G$, which proves the result.
$S$ is not equimatchable in $G$ if and only if there exist two maximal matchings $M_1$ and $M_2$ both covering $S$ such that $\abs{M_1}<\abs{M_2}$. Either $M_1$ is a second best matching or it is smaller,  in which case, by Lemma~\ref{lem:set-matchings}, there is a second best matching covering $S$.
\end{proof}

\begin{corollary}\label{cor:eta-hitting-set}
For every graph $G$, we have
$$\eta(G) = \min \{\abs{S}: S\subseteq V(G) \text{ is a hitting set of the hypergraph } {\Exp}_{2}(G) \} \,.$$
\end{corollary}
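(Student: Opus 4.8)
The plan is to derive Corollary~\ref{cor:eta-hitting-set} directly from Proposition~\ref{prop:hitting-set} together with the definition of $\eta(G)$. Recall that $\eta(G)$ is by definition the minimum size of an equimatchable set of $G$, that is,
\[
\eta(G) = \min\{\abs{S} : S\subseteq V(G)\text{ is equimatchable in }G\}\,.
\]
Proposition~\ref{prop:hitting-set} asserts that a subset $S\subseteq V(G)$ is equimatchable in $G$ if and only if $S$ is a hitting set of the hypergraph $\Exp_2(G)$. Thus the two families of sets being minimized over---equimatchable sets on the one hand, hitting sets of $\Exp_2(G)$ on the other---coincide exactly. Substituting this equivalence into the definition of $\eta(G)$ immediately yields the claimed identity.

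More carefully, I would argue that the minimum of a set-size functional over two coinciding feasible families must agree. Since $\{S : S\text{ equimatchable in }G\} = \{S : S\text{ is a hitting set of }\Exp_2(G)\}$ by Proposition~\ref{prop:hitting-set}, taking the minimum of $\abs{S}$ over each family gives the same value, which is precisely the assertion of the corollary. One should also confirm that both minima are taken over nonempty families so that they are well-defined: the family of equimatchable sets is nonempty because $V(G)$ is always equimatchable in $G$ (noted just before Proposition~\ref{prop:equimatchable-basic}), and correspondingly $V(G)$ is trivially a hitting set of any hypergraph on vertex set $V(G)$.

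There is essentially no obstacle here, as the corollary is a direct restatement of the proposition under the definition of $\eta$. The only point meriting a remark is the degenerate case where $G$ is equimatchable: then $\Exp_2(G)$ has no hyperedges, so every set (including $\emptyset$) is vacuously a hitting set, and the minimum hitting set has size $0$, matching $\eta(G)=0$ as recorded in Proposition~\ref{prop:equimatchable-basic}. A one-line proof suffices: by Proposition~\ref{prop:hitting-set}, equimatchable sets of $G$ are exactly the hitting sets of $\Exp_2(G)$, so minimizing $\abs{S}$ over the former (which defines $\eta(G)$) equals minimizing $\abs{S}$ over the latter.
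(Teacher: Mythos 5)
Your proof is correct and matches the paper's approach: the corollary is stated there without proof precisely because, as you observe, it is an immediate consequence of Proposition~\ref{prop:hitting-set} combined with the definition of $\eta(G)$. Your additional remarks on nonemptiness of the feasible families and the degenerate equimatchable case are sound but not needed.
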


Recall that fgiven a graph $G$, we denote by $\alpha(G)$ its independence number. By $\omega(G)$ we denote the clique number of $G$, that is, the maximum size of a clique in $G$. We denote by $\tau(G)$ the vertex cover number of $G$, that is, the minimum size of a vertex cover of $G$. It is well known that these parameters are related by the relations $\tau(G)+\alpha(G)=\abs{V(G)}$ and $\omega(G)=\alpha(\overline{G})$, where $\overline{G}$ denotes the complement of $G$. Following Costa et al.~\cite{CdWP}, we call a graph $G$ \emph{expandable} if for every two non-adjacent vertices $u$, $v$ of $G$, the graph $G-u-v$ has a perfect matching.

\newcommand{\vc}{\textsc{VertexCover}}
In the $\vc$ problem, the input is a graph $G$ and the objective is to determine $\tau(G)$. To show \apx-hardness, we will make an $L$-reduction from the $\vc$ problem, which is \apx-hard~\cite{MR1756204}. The reduction will be based on the following lemma, proved using the hitting set formulation of $\eta(G)$.

\begin{lemma}\label{lem:eta-tau}
Let $G$ be an expandable graph with a perfect matching. Then $\eta(G) = \tau(\overline{G})$.
\end{lemma}

\begin{proof}
Since $G$ has a perfect matching, every second best matching of $G$ exposes exactly two (non-adjacent) vertices, which means that every hyperedge of $Exp_2(G)$ consists of a pair of non-adjacent vertices of $G$. Conversely, for every pair $u,v$ of non-adjacent vertices of $G$, the graph $G - u - v$ has a perfect matching, say $M$. Such a matching $M$, is a second best matching of $G$ with $\Exp(M) = \set{u,v}$, which implies that $\set{u,v}$ is a hyperedge of $Exp_2(G)$. Consequently $\Exp_2(G) = \overline{G}$, and the statement of the proposition follows from Corollary~\ref{cor:eta-hitting-set}.
\end{proof}

\begin{corollary}\label{obs:eta-omega}
Let $G$ be an expandable graph with a perfect matching. Then $\eta(G) = \abs{V(G)}-\omega(G)$.
\end{corollary}

\begin{proof}
We have $\eta(G)=\tau(\overline{G})=\abs{V(\overline{G})}-\alpha(\overline{G})=\abs{V(G)}-\omega(G)$.
\end{proof}

\begin{theorem}\label{prop:eta-np-hard}
$\ed$ is \apx-hard.
\end{theorem}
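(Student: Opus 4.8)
The plan is to exhibit an $L$-reduction from \vc\ to \ed\ using the gadget built in Lemma~\ref{lem:eta-tau} and Corollary~\ref{obs:eta-omega}. Given an instance $G$ of \vc\ on $n$ vertices, the natural idea is to produce in polynomial time a graph $f(G)$ that is expandable and has a perfect matching, and whose equimatchability defect records the vertex cover number of $G$ (or of a closely related graph). Since Corollary~\ref{obs:eta-omega} gives $\eta(H)=\abs{V(H)}-\omega(H)$ for expandable graphs $H$ with a perfect matching, and clique number in $H$ corresponds to independence number in $\overline{H}$, the most direct route is to arrange $\overline{f(G)}$ so that its independence number equals $\alpha(G)$ up to a controlled additive shift; equivalently, arrange $f(G)$ so that $\tau(\overline{f(G)})$ equals $\tau(G)$ plus a term linear in a fixed quantity. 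Concretely, I would take $f(G)$ to be (an expandable, perfectly matchable supergraph built on) the complement of $G$ augmented with enough structure to guarantee both expandability and the existence of a perfect matching, so that Lemma~\ref{lem:eta-tau} yields $\eta(f(G)) = \tau(G) + c$ for a constant or polynomially-bounded $c$ depending only on the construction.

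First I would verify the two structural preconditions of Lemma~\ref{lem:eta-tau} for the constructed graph, namely that $f(G)$ has a perfect matching and is expandable; a clean way to force both is to add a large clique (or a join with a complete graph, as in the $K(G)$ construction of Lemma~\ref{lem:ReductionAlphaToMu}), since joining with a sufficiently large complete graph trivially supplies perfect matchings after deleting any two non-adjacent vertices. Second, I would compute $\eta(f(G))$ exactly via Lemma~\ref{lem:eta-tau} or Corollary~\ref{obs:eta-omega}, expressing it in terms of $\tau(G)$ (or $\alpha(G)$, $\omega(G)$) so that the optima of the two problems are affinely related. Third, I would check condition~(i) of an $L$-reduction, that $\mathrm{opt}_{\ed}(f(G)) \le a\cdot \mathrm{opt}_{\vc}(G)$ for a fixed constant $a$; this requires the additive shift $c$ to be bounded by a constant multiple of $\tau(G)$, which is why one must ensure the construction does not inflate $\eta$ by more than $O(\tau(G))$. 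Finally I would check condition~(ii), lifting an approximate hitting set / equimatchable set of $f(G)$ back to a vertex cover of $G$ with the error transferring linearly; because $\eta(f(G))$ and $\tau(G)$ differ by the fixed additive shift, a feasible equimatchable set of value $c_2$ maps to a vertex cover of value $c_1$ with $|\mathrm{opt}_{\vc}(G)-c_1| \le |\mathrm{opt}_{\ed}(f(G))-c_2|$, giving $b=1$.

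The main obstacle I anticipate is the first \emph{(i)} condition of the $L$-reduction, which demands $\mathrm{opt}_{\ed}(f(G)) = O(\mathrm{opt}_{\vc}(G))$ with an absolute constant hidden in the big-$O$. The danger is that a naive padding (adding a large clique to force expandability) makes $\eta(f(G))$ proportional to $\abs{V(f(G))}-\omega(f(G))$, which could be large relative to $\tau(G)$ when $\tau(G)$ is small; one must be careful that the padding contributes to $\omega$ rather than to the defect, so that the additive term stays controlled. A standard fix, which I would pursue, is to restrict to \vc\ on instances where $\tau(G) = \Theta(n)$ (such instances remain \apx-hard, e.g.\ on graphs of bounded degree~\cite{MR1756204}), so that any fixed polynomial shift is automatically absorbed into the constant $a$; alternatively one restricts the construction so that the clique we join has size linear in $n$ and verifies directly that $\eta(f(G)) = n - \tau(G)\cdot(\text{something}) $ stays within a constant factor of $\tau(G)$ on these instances. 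Once both inequalities are in place with the constants $a$ and $b=1$ exhibited, the \apx-hardness of \vc~\cite{MR1756204} transfers to \ed, completing the proof.
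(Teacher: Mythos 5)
Your overall strategy --- an $L$-reduction from \vc\ to \ed\ via Lemma~\ref{lem:eta-tau} --- is exactly the paper's, but your gadget is genuinely different. The paper subdivides every edge of a \emph{cubic} instance $G$ twice to get $G_1$, sets $G'=\overline{G_1}$, checks that $G'$ is expandable with a perfect matching, and gets $\eta(G')=\tau(G_1)=\tau(G)+\abs{E(G)}$ via Poljak's observation; the additive shift $\abs{E(G)}$ is then tamed by the cubic bound $\tau(G)\ge \abs{E(G)}/3$, giving $a=4$. Your proposed gadget, $\overline{G}$ joined with a large clique, actually works \emph{more} cleanly than you give it credit for, and the worry driving your ``main obstacle'' paragraph dissolves once you do the one computation you skipped: the complement of a join is the disjoint union of the complements, so $\overline{\,\overline{G}\ast K_m\,}=G+mK_1$, and Lemma~\ref{lem:eta-tau} gives $\eta(\overline{G}\ast K_m)=\tau(G+mK_1)=\tau(G)$ exactly. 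The padding is absorbed entirely into $\omega$, the shift is $c=0$, condition (i) holds with $a=1$, and no restriction to bounded-degree instances is needed. (You still must fix $m\ge\abs{V(G)}$ with $m\equiv\abs{V(G)}\pmod 2$ and record the two-line expandability check: any non-adjacent pair lies in $V(\overline{G})$, and after deleting it the remaining $\overline{G}$-vertices can be matched into the clique and the leftover clique vertices to each other.) Condition (ii) with $b=1$ then goes through as you sketch: an equimatchable set of the gadget is a hitting set of $\Exp_2=G+mK_1$, i.e.\ a vertex cover of $G$ together with discardable isolated vertices.

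Two cautions. First, as written you never commit to a construction nor verify the identity $\eta(f(G))=\tau(G)+c$ for a concrete $c$; that verification \emph{is} the reduction, and ``$c$ depending only on the construction'' cannot stand in for it (indeed $c$ depends on $G$ in the paper's version, where $c=\abs{E(G)}$). Second, your claim that on instances with $\tau(G)=\Theta(n)$ ``any fixed polynomial shift is automatically absorbed into the constant $a$'' is false as stated: only shifts in $O(\tau(G))=O(n)$ are absorbed, which is precisely why the paper needs cubic instances and the bound $\abs{E(G)}\le 3\tau(G)$. With the join gadget neither issue arises, so I would drop the fallback and carry the join construction through in full.
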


\begin{proof}
Since $\vc$ is \apx-hard for cubic graphs~\cite{MR1756204}, it suffices to show an $L$-reduction from
$\vc$ in cubic graphs to $\ed$.

Let $G$ be a cubic graph that is an instance of $\vc$.
Since $G$ is cubic, $\abs{V(G)}\ge 4$ is even.
We transform $G$ into an instance $G'$ of $\ed$, in two steps:
\begin{enumerate}[(1)]
\item Subdivide each edge twice, and let $G_1$ be the resulting graph. By an observation due to Poljak~\cite{MR0351881}, we have $\tau(G_1)=\tau(G)+\abs{E(G)}$. Note that $G_1$ admits a partition of its vertex set into an independent set (namely $V(G)$) and a set inducing a perfect matching (the rest).

\item Let $G'$ be the complement of $G_1$. Note that $G'$ admits a partition into a clique ($V(G)$) and a
set inducing the complement of a perfect matching (the rest).
\end{enumerate}

Since $\abs{V(G)}$ is even, and $\abs{V(G')}=\abs{V(G)}+2\abs{E(G)}$, graph $G'$ is of even order and it can be easily seen that it has a perfect matching. Using the fact that $\abs{V(G)} \ge 4$, it can also be verified that $G'$ is expandable. Indeed, the subgraph of $G'$ induced by the original vertices (of $G$) forms a clique on $\abs{V(G)}$ vertices, and the subgraph of $G'$ induced by the newly added (subdivision) vertices form a clique on $2 \abs{E(G)}$ vertices minus a perfect matching. It follows that $G'$ has two types of non-edges; a) between two new vertices $u,v$ and b) between one new and one original vertex. In case a), $G-u-v$ has a perfect matching which consists of a perfect matching of $K_{\abs{V(G)}}$ and a perfect matching of the subgraph $K_{2 \abs{E(G)}-2}$  minus a perfect matching. In case b), let $x$ and $y$ be two original vertices which are adjacent in $G$ and $u,v$ be their subdivision vertices in $G'$. Let also the non-edge under consideration be between $x$ and $u$. Now, a perfect matching of $G'-x-u$ can be obtained as follows: match $v$ and $y$ respectively with two different original vertices (this can be done since $\abs{E(G)} \geq 3$ and thus $\abs{V(G)} \geq 4$ since $\abs{V(G)}$ is even), the remaining graph can be partitioned into a clique on $|V(G)|-4$ original vertices and a clique on $2 \abs{E(G)}-2$ new vertices minus a perfect matching, each of which admits a perfect matching.

By Lemma~\ref{lem:eta-tau}, we have that $\eta(G')=\tau(\overline{G'})$. Since $\overline{G'}=G_1$ and $\tau(G_1)=\tau(G)+\abs{(E(G)}$ we have $\eta(G')=\tau(G)+\abs{E(G)}$. We now complete the proof by showing that the above reduction is an $L$-reduction.
Since $G$ is cubic, every vertex in a vertex cover of $G$ covers exactly $3$ edges,
hence $\tau(G)\geq \frac{|E(G)|}{3}$.
This implies that $\eta(G')=\tau(G)+\abs{E(G)}\le 4\tau(G)$; in particular, the first condition
in the definition of $L$-reducibility is satisfied with $a = 4$.

The second condition in the definition of $L$-reducibility
states that for every equimatchable set $S$ of $G'$
we can compute in polynomial time a vertex cover $C$ of $G$
such that
$|C|-\tau(G)\le b \cdot (|S|-\eta(G'))$ for some $b>0$.
We claim that this can be achieved with $b = 1$.
Let $S$ be an equimatchable set of $G'$.
By Proposition~\ref{prop:hitting-set}, $S$ is a hitting set of the hypergraph
$\Exp_2(G')$. The same arguments as those used in the proof of Lemma~\ref{lem:eta-tau} show that
$\Exp_2(G') = \overline{G'} = G_1$, therefore $S$ is a vertex cover of $G_1$.
The set $S$ can be easily modified into a vertex cover $C$ of $G$ such that
$|C|\le |S|-|E(G)|$. (First, transform $S$ into a vertex cover $C_1$ of $G_1$ with $|C_1|\le |S|$ by assuring that for each
edge of $G$ exactly one of the corresponding subdivision vertices belongs to $C_1$. Second, let $C = C_1\cap V(G)$.)
Therefore, $|C|-\tau(G)\le |S|-|E(G)|-\tau(G) \le |S|-\eta(G')$, as claimed.
\end{proof}

We conclude the subsection with an $\xp$ algorithm for the problem of verifying if a given graph $G$ satisfies $\eta(G)\le k$.

\begin{proposition}
There exists an algorithm running in time $\bigoh((nm)^kn^2m)$ that, given a graph $G$ with $n$ vertices and $m$ edges and an integer $k$,
determines if $\eta(G) \le k$.
\end{proposition}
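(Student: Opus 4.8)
The plan is to exploit the hitting set formulation of the equimatchability defect from Corollary~\ref{cor:eta-hitting-set}: we have $\eta(G)\le k$ if and only if the hypergraph $\Exp_2(G)$ admits a hitting set of size at most $k$. I would solve this hitting set instance by a bounded search tree of depth at most $k$, following the standard branching scheme for hitting set whose branching factor is the maximum hyperedge size. The key point is that, although $\Exp_2(G)$ may have exponentially many hyperedges and cannot be written down explicitly, we never need the whole hypergraph: at each node it suffices to have an oracle that, given the current partial solution $S$, either certifies that $S$ is already a hitting set or returns a single unhit hyperedge.

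Concretely, the recursive procedure maintains a set $S$ with $\abs{S}\le k$. By Proposition~\ref{prop:hitting-set}, $S$ is a hitting set of $\Exp_2(G)$ if and only if $S$ is equimatchable in $G$, i.e.\ no second best matching of $G$ covers $S$. The oracle therefore tests whether $S$ is equimatchable; if it is, the branch succeeds. Otherwise the oracle also returns a second best matching $M$ covering $S$, and since $M$ covers $S$ the hyperedge $\Exp(M)$ is disjoint from $S$, hence unhit. As any hitting set must contain a vertex of $\Exp(M)$, I branch by recursing on $S\cup\{v\}$ for each of the at most $\abs{V(G)}-2\nu(G)+2\le \abs{V(G)}$ vertices $v\in\Exp(M)$, decreasing the remaining budget by one; the search reports $\yes$ exactly when some branch reaches an equimatchable set of size at most $k$. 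Correctness is the usual soundness and completeness of this branching rule. For the running time, the tree has branching factor at most $n$ and depth at most $k$, hence at most $n^k\le (nm)^k$ leaves, while the work at each node is dominated by a constant number of maximum/perfect matching computations together with the oracle, which I would bound crudely by $\bigoh(n^2 m)$. This yields the claimed bound $\bigoh((nm)^k n^2 m)$.

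The main obstacle is implementing the oracle in polynomial time, namely deciding whether a given $S$ is equimatchable and, when it is not, exhibiting a second best matching covering $S$. The subtlety is that computing the minimum size $\beta_S$ of a maximal matching covering $S$ is \np-hard in general (it subsumes the MMM number when $S=\emptyset$), so the oracle must \emph{not} compute $\beta_S$ but only decide whether $\beta_S<\nu(G)$. I would proceed as follows: first find a maximum matching $M^*$ covering $S$ in polynomial time (if $S$ is not matching covered it is vacuously equimatchable and the oracle reports this); then search for a non-maximum maximal matching covering $S$ by a shortening alternating search that relocates a pair of exposed vertices into an independent configuration disjoint from $S$, in the spirit of the shifting argument of Lemma~\ref{lem:pair P4 M}. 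Finally, any non-maximum maximal matching covering $S$ found this way can be lifted to a second best one still covering $S$ by repeated augmentation using Lemma~\ref{lem:set-matchings}, supplying the hyperedge $\Exp(M)$ to branch on. Making this alternating search provably correct and polynomial, so that the failure to find a shortening genuinely certifies equimatchability of $S$, is the delicate step I expect to require the most care.
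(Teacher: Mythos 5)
There is a genuine gap, and it sits exactly where you flag it: the oracle. Your bounded-search-tree framework over the hitting set formulation is sound in principle (an unhit hyperedge of $\Exp_2(G)$ is precisely $\Exp(M)$ for a second best matching $M$ covering the current $S$, and branching on its vertices is the standard argument), but the whole construction rests on a \emph{polynomial-time} subroutine that decides whether a given set $S$ is equimatchable and, if not, produces a second best matching covering $S$. You do not prove this subroutine exists; the ``shortening alternating search'' is only a sketch, and there is no argument that failure of such a search certifies equimatchability of $S$. In fact the paper explicitly lists ``can one check in polynomial time whether a given set is equimatchable'' as an open problem, so your claimed $\bigoh(n^2m)$ oracle would, if correct, resolve that question. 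Deciding whether $\beta_S<\nu(G)$ for an arbitrary matching-covered $S$ is not known to reduce to maximum-matching computations; the case $S=\emptyset$ is equimatchable-graph recognition, which is polynomial only via a nontrivial Gallai--Edmonds-based analysis, and no such analysis is offered for general $S$.

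The paper avoids this obstacle entirely, and the idea it uses is the one missing from your proposal: when $\abs{S}\le k$, every inclusion-wise minimal matching $M$ covering $S$ has at most $k$ edges, so there are at most $\bigoh(m^k)$ of them and they can be enumerated. Every maximal matching covering $S$ decomposes as such an $M$ plus a maximal matching of $G[\Exp(M)]$, so $S$ is equimatchable if and only if each graph $G[\Exp(M)]$ is equimatchable (testable in $\bigoh(n^2m)$ time by the Demange--Ekim algorithm) and all the values $\abs{M}+\nu(G[\Exp(M)])$ coincide. This turns the equimatchability test for a bounded-size set into polynomially many equimatchability tests for \emph{graphs}, at the cost of the $m^k$ factor, which is precisely where $(nm)^k$ in the stated bound comes from; the paper then simply enumerates all $\bigoh(n^k)$ candidate sets of size $k$ rather than branching. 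If you replaced your polynomial oracle with this XP-time test (which also yields two maximal matchings of different sizes covering $S$, and hence via Lemma~\ref{lem:set-matchings} an unhit hyperedge, whenever $S$ is not equimatchable), your branching scheme would go through within the same $\bigoh((nm)^kn^2m)$ bound; as written, however, the proposal's central step is unsupported.
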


\begin{proof}
Consider the following algorithm.
First, we generate all $\bigoh(n^k)$ sets $S$ of vertices of size $k$.
For each of them we test whether $S$ is equimatchable in polynomial time as follows:
\begin{itemize}
\item First, we check if $S$ is matching covered. If $S$ is not matching covered, then it is vacuously equimatchable.
\item If $S$ is matching covered, then we test whether it is equimatchable as follows. We generate all $\bigoh(m^k)$ inclusion-wise minimal matchings $M$ covering $S$. (Note that each such matching is of size at most $k$.) For each of them, we test whether $G[\Exp(M)]$ is equimatchable.
If these graphs are not all equimatchable, then $S$ is not equimatchable.
If all these graphs are equimatchable, then $S$ is equimatchable if and only if the values
$\abs{M}+\nu(G[\Exp(M)])$
are the same for all matchings $M$ generated above.
\end{itemize}
To estimate the time complexity of the algorithm, note that for each of the $\bigoh(n^k)$ sets $S$, we need
to verify if $S$ is matching covered. This can be done in time $\bigoh(n^2m)$ by reducing the problem
to an instance of the Maximum Weight Matching problem and applying Edmonds' algorithm~\cite{MR0183532} as follows: give weight 2 to edges with both end-vertices in $S$, weight 1 to those having exactly one end-vertex in $S$ and weight 0 otherwise. Now, there is a matching of $G$ covering $S$ if and only if the maximum weighted matching in this instance has weight $\abs{S}$. Moreover, for each of the $\bigoh(m^k)$ inclusion-wise minimal matchings covering $S$, we compute $G[\Exp(M)]$ and test if it is equimatchable, which can be done in time $\bigoh(n^2m)$ using the algorithm by Demange and Ekim~\cite{demange_ekim_equi}.
The values of $\abs{M}+\nu(G[\Exp(M)])$ can be computed and compared along the way, hence the claimed time complexity follows.
\end{proof}

\subsection{Equimatchability defect of cycles}\label{sec:EqmSets-cycles}

Given that the equimatchability defect is hard to compute for general graphs, it is an interesting question to identify families of (non-equimatchable) graphs for which this parameter can be efficiently computed. In the following theorem, we give exact values of the equimatchability defect for all cycles.

\begin{theorem}
For every integer $n\ge 3$, we have
$$\eta(C_n) = \left\{
                \begin{array}{ll}
                  0, & \hbox{if $n\in \{3,4,5,7\}$;} \\
                  n/2, & \hbox{if $n\ge 6$ and $n$ is even;} \\
                  (n-3)/2, & \hbox{if $n\ge 9$ and $n$ is odd.} \\
                \end{array}
              \right.$$
\end{theorem}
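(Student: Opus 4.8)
The plan is to compute $\eta(C_n)$ by directly analyzing the structure of second best matchings of $C_n$ and applying the hitting set formulation from Corollary~\ref{cor:eta-hitting-set}, namely $\eta(C_n) = \min\{|S| : S \text{ is a hitting set of } \Exp_2(C_n)\}$. First I would dispose of the small cases $n\in\{3,4,5,7\}$ by checking directly that $C_n$ is equimatchable, so that $\eta(C_n)=0$; these are well known (and follow, e.g., from the fact that all maximal matchings of these short cycles have the same size). For the remaining cases the cycle is not equimatchable, so I must understand exactly which pairs of exposed-vertex sets arise from maximal matchings of size $\nu(C_n)-1$.

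The key combinatorial observation is that a maximal matching $M$ of $C_n$ exposes an independent set, and the exposed vertices must be pairwise non-adjacent with no two consecutive exposed vertices (since $M$ is maximal) — in fact between any two consecutive exposed vertices the edges of $M$ alternate, forcing an odd number of vertices strictly between them. I would first record $\nu(C_n) = \lfloor n/2\rfloor$, so that a second best matching has size $\lfloor n/2\rfloor - 1$ and exposes $n - 2(\lfloor n/2\rfloor - 1)$ vertices, which equals $2$ when $n$ is even and $3$ when $n$ is odd. Thus for even $n$ the hyperedges of $\Exp_2(C_n)$ are certain independent pairs, and for odd $n$ they are certain independent triples. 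The next step is to characterize precisely which such pairs (resp. triples) occur as $\Exp(M)$: for even $n$, two vertices at an even distance apart (so that both of the two arcs between them have even length and admit perfect matchings), and for odd $n$, three vertices splitting the cycle into three odd arcs. Labeling the vertices $0,1,\dots,n-1$ and writing the gap condition modulo $n$ makes this explicit.

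Having identified the hyperedges, the problem reduces to computing the minimum hitting set of this structured hypergraph, which I would solve by an extremal argument on arcs of the cycle. For even $n\ge 6$: the hyperedges are all pairs of vertices at even distance, and I would argue that a hitting set must meet every such pair, showing the minimum is exactly $n/2$ — the lower bound coming from the observation that the set of all even-indexed vertices forms an independent system of $n/2$ disjoint pairs (each a hyperedge), so any hitting set needs at least one vertex per pair, while the upper bound is realized by taking all vertices of one parity class. For odd $n\ge 9$: the hyperedges are independent triples, and I would show the minimum hitting set has size $(n-3)/2$ by exhibiting an explicit hitting set of this size (for instance, a periodic pattern that leaves exactly three consecutive-type gaps uncovered) and proving optimality via a counting/packing bound on disjoint hyperedges, since $\lfloor n/3 \rfloor$-type packings of triples give the matching lower bound after accounting for the three exposed vertices.

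The main obstacle I anticipate is the odd case $n\ge 9$. The even case is clean because the hyperedges form a perfect structure (disjoint pairs giving a tight bound), but for odd $n$ the triples overlap in a more complicated way, and both directions require care: constructing a hitting set of size exactly $(n-3)/2$ means carefully choosing which vertices to include so that no independent triple is missed, while the matching lower bound requires finding a large enough family of pairwise-disjoint (or otherwise independent) hyperedges to force the bound. Verifying the boundary behaviour — why $n=7$ is equimatchable but $n=9$ is not, and why the formula only kicks in at $n\ge 9$ for odd $n$ — will also need an explicit small-case check to confirm the transition. I would handle this by translating the triple condition into an arithmetic statement about the three arc-lengths summing to $n$ with each arc odd, and then reducing the hitting set computation to a covering problem on the cyclic residues, where the extremal configurations can be described periodically.
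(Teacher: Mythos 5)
Your reduction of $\eta(C_n)$ to a minimum hitting set of $\Exp_2(C_n)$ is a legitimate reformulation (the paper argues directly with maximal matchings instead, but the two are equivalent by Corollary~\ref{cor:eta-hitting-set}), and your description of the hyperedges is essentially right once a parity slip is corrected: for even $n$ the hyperedges are the pairs $\{v_i,v_j\}$ whose two \emph{open} arcs both have an even number of vertices, i.e.\ pairs at \emph{odd} cyclic distance at least $3$, not even distance as you wrote. Taken literally your version breaks the upper bound, since a parity class would then miss pairs of like-indexed vertices; with the correct parity the even case closes cleanly (one parity class of size $n/2$ hits every hyperedge, and the $n/2$ pairwise disjoint hyperedges $\{v_{2i},v_{2i+3}\}$, $i=0,\dots,n/2-1$, partition $V(C_n)$ and give the matching lower bound). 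Your statement that ``the set of all even-indexed vertices forms an independent system of $n/2$ disjoint pairs'' conflates the hitting set with the packing, but both objects do exist.

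The genuine gap is the lower bound in the odd case. The hyperedges there are triples, so any family of pairwise disjoint hyperedges has at most $\lfloor n/3\rfloor$ members, which is strictly smaller than the target $(n-3)/2$ for every odd $n\ge 11$: for $n=11$ you can pack at most $3$ disjoint triples but must prove a lower bound of $4$. The packing bound happens to suffice only at $n=9$, where the three hyperedges $\{v_i,v_{i+3},v_{i+6}\}$ are pairwise disjoint. Your hedge ``or otherwise independent hyperedges'' does not rescue this; what is actually needed is a covering statement about the complement: writing $n=2k+1$, every set $R\subseteq V(C_n)$ with $|R|=k+3$ contains an independent triple whose deletion leaves three non-empty paths of even order, i.e.\ contains a hyperedge, so no set of size $k-2$ can be a hitting set. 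The paper proves exactly this by locating an induced $2K_2$ inside $C_n-S'$, finding a further vertex of $R$ outside its closed neighbourhood, and selecting one endpoint from each edge of the $2K_2$ according to the parity of its distance to that vertex. Some argument of this covering type (or a correctly executed LP-duality/fractional bound, which you would still have to supply) is indispensable; as written, your plan for odd $n\ge 11$ cannot reach the claimed value.
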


\begin{proof}
Each of the cycles $C_3,C_4,C_5,C_7$ is equimatchable, therefore $\eta(C_n) = 0$ for all $n\in \{3,4,5,7\}$. Throughout this proof, index arithmetic is modulo the length of the cycle.

Consider an even cycle of order at least six, $C = v_0 v_1 \ldots v_{2k-1}$ for $k \ge 3$. It is easy to see that the set $S$ of all even-indexed vertices is an equimatchable set because every matching covering $S$ is a perfect matching of $C$. Therefore, $\eta(C) \leq k$. Now, assume for a contradiction that there is an equimatchable set $S'$ with less than $k$ vertices. Then there are two consecutive vertices $v_\ell,v_{\ell+1} \notin S'$. Consider the path $P= C-\{v_{\ell-1}, v_{\ell}, v_{\ell +1}, v_{\ell+2}\}$ on even number of vertices. We claim that $V(P) \nsubseteq S'$.
If $k\ge 4$, then this follows from the fact that $|V(P')| = 2k-4\ge k>|S'|$. If $k = 3$, then $V(P)\subseteq S'$ would imply that $S'$ consists of two consecutive vertices of the cycle, and no such set is equimatchable in the $6$-cycle. Thus, there is at least one vertex $v_p \in V(P) \setminus S'$. Moreover, for exactly one of $v_\ell$ or $v_{\ell+1}$, say w.l.o.g.~for $v_\ell$, the two components of $C -\set{v_p,v_\ell}$ are paths on even numbers of vertices and thus the union of these two paths admits a perfect matching $M'$. Since $v_p$ and $v_\ell$ are non-adjacent, matching $M'$ is maximal in $C$. Hence $M'$ and any maximum matching $M$ of $C$ covering $V(M')$ are two maximal matchings of $C$ of different sizes both covering $S'$, contradicting the equimatchability of $S'$. We conclude that $\eta(C)=k$.

Now, let us consider an odd cycle of order at least nine, $C = v_0 v_1 \ldots v_{2k}$ for $k \ge 4$. We want to show that $\eta(C) = k-1$. We can easily observe that $S =\{v_{2i+1}: 0\le i\le k-2\}$ is an equimatchable set with $k-1$ vertices, so $\eta(C) \leq k-1$. Assume for a contradiction that $C$ has an equimatchable set of size at most $k-2$. Then, by Proposition~\ref{prop:equimatchable-basic}.\eqref{lbl:EquimSetMonotonicity}, $C$
$S'$ has an equimatchable set of size exactly $k-2\ge 2$. We will show that there is an independent set $I\subseteq V(C)\setminus S'$ with $|I| = 3$ and such that the graph $C - I$ is the disjoint union of three non-empty paths of even order. This will imply the existence of a maximal matching $M$ of $C$ saturating $S'$ with $\abs{M}=k-1$, contrarily  to the equimatchability of $S'$ since it can also be trivially covered with a matching of size $k$, and hence $\eta(C)=k-1$.

We first claim that there exists an induced $2K_2$ in $C-S'$. Let $R=V(C)\setminus S'$. Since $|S'|= k-2$, graph $C-S'$ contains at most $k-2$ components, each of which is isomorphic to some path $P_i$, $i\geq 1$. Assume  $C-S'$ has no $2K_2$. This means that
$C-S'$ has at most one nontrivial component, which, if existing, has at most four vertices.
It follows that $C-S'$ has at most $4+(k-3) = k+1$ vertices, a contradiction with $|R|= k+3$. So the claim holds.
Now, pick an induced copy of $2K_2$ in $C-S'$, say $F$, having at least two neighbours in $S'$; note that this is always possible by shifting the vertices of the $2K_2$ towards the vertices of $S'$ if necessary. Remark that $|R|= k+3\geq 7$ since $k\geq 4$, moreover $|N[F]|\leq 8$ where at most $6$ of these vertices are in $R$. It follows that there exists at least one vertex $v_z$ in $C-N[F]$ such that $v_z\in R$. Without loss of generality, let $\{v_l,v_{l+1},v_t,v_{t+1}\}$ induce $F$ and let $v_z$ be such that $v_{l+1}$ and $v_t$ are the closest
vertices of $2K_2$ to $v_z$ in $C$. We now define an independent set $I$ as follows:
\begin{itemize}
\item $v_z\in I$,
\item $v_{l+1} \in I$ if the distance between $v_z$ and $v_{l+1}$ is odd; $v_l \in I$ otherwise,
\item $v_{t} \in I$ if if the distance between $v_z$ and $v_{t}$ is odd; $v_{t+1} \in I$ otherwise.
\end{itemize}
Clearly, $C-I$ consists of three non-empty paths of even order, implying that $C-I$ contains a perfect matching which is a maximal matching of $C$ saturating $S'$ and of size $k-1$, contradiction to the equimatchability of $S'$. Hence $\eta(C)=k-1$.
\end{proof}

\subsection{Sharp upper and lower bounds for the equimatchability defect}\label{sec:EqmSets-bounds}

In this section we derive two bounds for the equimatchability defect, $\eta(G)$: a sharp, polynomial-time computable upper bound in terms of $\nu(G)$, the matching number of the graph, and a sharp lower bound given by $\mu(G)$, the matching gap of the graph.

A trivial upper bound is given by $\eta(G) \leq 2\nu(G)$, since the set of vertices covered by an arbitrary maximum matching is equimatchable.
As we now show, as soon as $G$ has at least one edge, this bound can be further improved by reducing it by $2$.
The graphs with which we will show sharpness of the bound will be particular Cartesian product graphs. Recall that the \emph{Cartesian product} $G \Box H$ of two graphs $G$ and $H$ is the graph with vertex set $V(G) \times V(H)$ in which two distinct vertices $(u,v)$ and $(u',v')$ are adjacent if and only if
\begin{enumerate}[(a)]
\item $u = u'$ and $v$ is adjacent to $v'$ in $H$, or
\item $v = v'$ and $u$ is adjacent to $u'$ in $G$.
\end{enumerate}

\begin{theorem}\label{thm:bound1}
Every graph $G$ with at least one edge satisfies
$$\eta(G) \leq 2\nu(G)-2.$$
Moreover, this bound is sharp; there are graphs of arbitrarily large equimatchability defect
achieving the bound with equality.
\end{theorem}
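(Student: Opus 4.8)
The plan is to prove the two parts of Theorem~\ref{thm:bound1} separately: first the upper bound $\eta(G)\le 2\nu(G)-2$ for any graph with an edge, and then the sharpness via an explicit construction.

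For the upper bound, the key idea is to exhibit an equimatchable set $S$ of size at most $2\nu(G)-2$. Recall from the discussion preceding Corollary~\ref{cor:eta-hitting-set} that $S$ is equimatchable in $G$ if and only if every maximal matching covering $S$ is maximum, equivalently $S$ is a hitting set of the hypergraph $\Exp_2(G)$ of sets of exposed vertices of second best matchings. So I would fix a maximum matching $M^*$ and try to delete one of its edges from its covered vertex set. Concretely, take any edge $uv\in M^*$ and set $S=V(M^*)\setminus\{u,v\}$, so $|S|=2\nu(G)-2$. The thing to verify is that $S$ is equimatchable: I must show that every maximal matching $N$ covering $S$ has size $\nu(G)$. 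Suppose $N$ covers $S$ but $|N|<\nu(G)$; then $N$ is not maximum, so by Berge's theorem there is an $N$-augmenting path $P$, whose two endpoints lie in $\Exp(N)$. Since $N$ covers $S=V(M^*)\setminus\{u,v\}$, the only vertices $N$ can possibly expose are $u$, $v$, and the vertices outside $V(M^*)$ — but the latter are exposed by the maximum matching $M^*$ and hence form, together with reachability considerations, a restricted set. The main obstacle is to rule out an augmenting path; the cleanest route is to note that the endpoints of any $N$-augmenting path must be exposed by $N$, hence lie in $\{u,v\}\cup\Exp(M^*)$, and then argue that augmenting $N$ once yields a maximal matching of size $|N|+1$ still covering $S$ (Lemma~\ref{lem:set-matchings}), so by induction we reach a maximum maximal matching covering $S$; what needs care is showing there cannot be a \emph{second best} maximal matching exposing a pair disjoint from $S$, i.e.\ that no hyperedge of $\Exp_2(G)$ avoids $S$. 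I expect the simplest correct argument is: any maximal matching missing both $u$ and $v$ either is already maximum, or admits an augmenting path between two exposed vertices one of which is $u$ or $v$ (since $\{u,v\}$ are the only ``freed'' vertices relative to $M^*$), and pushing this augmentation keeps $S$ covered, contradicting maximality being second-best while avoiding $S$.

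For sharpness, I would look for a family of graphs where no smaller equimatchable set exists, and the hint in the statement points to particular Cartesian products. The natural candidates are products such as $K_2\,\Box\,K_n$ (the prism/``book''-like graphs) or $C_m\,\Box\,K_2$, which have perfect matchings yet force many exposed vertices in second best matchings. Using Corollary~\ref{obs:eta-omega}, if one can arrange the graph to be expandable with a perfect matching, then $\eta(G)=|V(G)|-\omega(G)$, and I would choose the product so that $\omega(G)$ is small (say constant) while $|V(G)|=2\nu(G)$ is large, yielding $\eta(G)$ close to $2\nu(G)$. The delicate point is matching the bound \emph{with equality}: I need $|V(G)|-\omega(G)=2\nu(G)-2$, i.e.\ $\omega(G)=2$, which cannot hold for a graph with a triangle-free but edge-containing expandable structure unless handled carefully, so more likely the extremal family is built directly rather than through Corollary~\ref{obs:eta-omega}.

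The hardest part of the whole argument will be the sharpness direction: verifying that the chosen Cartesian product graphs have equimatchability defect \emph{exactly} $2\nu(G)-2$, which requires a matching lower bound $\eta(G)\ge 2\nu(G)-2$. For this I would again use the hitting-set formulation (Corollary~\ref{cor:eta-hitting-set}): I must show that every hitting set of $\Exp_2(G)$ has size at least $2\nu(G)-2$, equivalently that the second best matchings are rich enough that their exposed pairs cannot all be pierced by a small set. Concretely I would identify, for the extremal graph, a large collection of second best matchings whose exposed vertex sets behave like the edges of a graph with small independence number (so that its vertex cover — the hitting set — is forced to be large), and conclude via Corollary~\ref{cor:eta-hitting-set}. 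I expect the bulk of the technical work to be computing $\nu(G)$, exhibiting the family of second best matchings explicitly, and checking that their exposed sets cover all but two vertices in enough configurations to force $|S|\ge 2\nu(G)-2$.
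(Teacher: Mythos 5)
Your choice of the candidate set $S=V(M^*)\setminus\{u,v\}$ for an edge $uv\in M^*$ is exactly the paper's, but your verification that $S$ is equimatchable goes off track and never closes. The decisive fact is simply that $u$ and $v$ are \emph{adjacent}: if a maximal matching $N$ covers $S$ and $|N|\le\nu(G)-1$, then $|V(N)|\le 2\nu(G)-2=|S|$ forces $V(N)=S$, so both $u$ and $v$ are exposed by $N$ and $N\cup\{uv\}$ is a strictly larger matching, contradicting maximality of $N$. No appeal to Berge's theorem or augmenting paths is needed, and your proposed route does not work as stated: the case ``any maximal matching missing both $u$ and $v$'' is vacuous for precisely this reason (its exposed set would not be independent), and invoking Lemma~\ref{lem:set-matchings} to grow $N$ to a maximum maximal matching covering $S$ only shows that \emph{some} maximal matching covering $S$ is maximum, whereas equimatchability of $S$ requires that \emph{every} one is.

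The sharpness direction is where the real gap lies. You correctly sense that Corollary~\ref{obs:eta-omega} gives $\eta(G)=|V(G)|-\omega(G)=2\nu(G)-2$ whenever $G$ is expandable with a perfect matching and $\omega(G)=2$, but you then back away from this route on the grounds that $\omega(G)=2$ ``cannot hold \dots unless handled carefully.'' This is a confusion: $\omega(G)=2$ just means $G$ is triangle-free with at least one edge, and triangle-free expandable graphs with perfect matchings do exist. The paper takes the prisms $C_{2k+1}\,\Box\,K_2$ for $k\ge 2$: these are triangle-free, have a perfect matching, and a short case analysis (splitting on whether the two non-adjacent vertices lie in the same copy of $C_{2k+1}$ or not) shows they are expandable, so Corollary~\ref{obs:eta-omega} immediately yields $\eta=|V|-2=2\nu-2$ with $\nu$ arbitrarily large. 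Your alternative plan --- exhibiting enough second best matchings by hand and bounding hitting sets of $\Exp_2(G)$ from below --- is essentially a re-derivation of Lemma~\ref{lem:eta-tau} in a special case, but you never commit to a family nor carry out the verification, so as written the sharpness claim is unproven.
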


\begin{proof}
Let $M$ be a maximum matching of $G$ and let $e$ be an edge of $M$. To show the inequality we will show that the set $S:=V(M\setminus \{e\})$ is equimatchable in $G$. Since $S$ has exactly $2\nu(G)-2$ vertices, every maximal matching of $G$ covering $S$ has at least $\nu(G)-1$ edges. Suppose for a contradiction that $M'$ is a maximal matching of $G$ such that $M'$ covers $S$ and $\abs{M'}=\nu(G)-1$. Then $V(M')=S$ which implies that $M'$ is not maximal, since $M' \cup \{e\}$ is a matching properly containing $M'$. Therefore, $S$ is equimatchable, as claimed.

To show sharpness, observe that by Corollary~\ref{obs:eta-omega} any triangle-free expandable graph with a perfect matching achieves the bound with equality. An infinite family of such graphs is given by prisms over odd cycles of length at least $5$, namely, $C_{2k+1} \Box K_2$ for some $k \ge 2$. These graphs are clearly triangle-free and admit a perfect matching. Moreover, $\nu(G)$ can be arbitrarily large as claimed. It remains to show that these graphs are expandable. Let $G=C_{2k+1} \Box  K_2$ for some $k \geq 2$ where $V(C_{2k+1})=\{v_1,v_2,\ldots,v_{2k+1}\}$ and $V(K_2)=\{u_1,u_2\}$. We also denote $G_i=G[\{(v_1,u_i), (v_2,u_i), \ldots, (v_{2k+1},u_i)\}]$ for $i=1,2$. Consider two non-adjacent vertices $(v_i,u_k), (v_j,u_\ell)$ of $G$ for $i,j \in [2k+1]$ and $k,\ell \in \{1,2\}$. First, let $k=\ell$, say $k=\ell=1$ without loss of generality, then for some $p \notin \set{i, j}$, $G_1 \setminus \{(v_i,u_1), (v_j,u_1),(v_{p},u_1)\}$ has a perfect matching $M_1$. Moreover  $G_2 \setminus \{(v_p,u_2)\}$ has a perfect matching $M_2$. Then,
$M_1 \cup M_2 \cup \{(v_p,u_1)(v_p,u_2)\}$ is a perfect matching in $G \setminus \{(v_i,u_1), (v_j,u_1)\}$. Assume now $k \neq \ell$. Then $G_k \setminus \{(v_i,u_k)\}$ has a perfect matching $M_1$ and $G_\ell \setminus \{(v_j,u_\ell)\}$ has a perfect matching $M_2$. We therefore get a perfect matching $M_1 \cup M_2$ of $G \setminus \{(v_i,u_k), (v_j,u_\ell)\}$. Hence $G$ is expandable.
\end{proof}

We remark that, in contrast with the inequality given by Theorem~\ref{thm:bound1}, the matching number of $G$ cannot be bounded from above by any function of $\eta(G)$. There exist equimatchable graphs with arbitrarily large matching number (for example, complete graphs).

We now turn to the lower bound, which relates our two measures of equimatchability to each other.

\begin{theorem}\label{thm:bound2}
For every graph $G$, we have
$$\mu(G) \le \eta(G).$$
Moreover, this bound is sharp; there are graphs with arbitrarily large values of the two parameters achieving the bound with equality.
\end{theorem}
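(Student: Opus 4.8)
The plan is to prove the two claims separately, starting with the inequality $\mu(G)\le \eta(G)$, which I would establish in the equivalent form: every equimatchable set $S$ satisfies $|S|\ge \mu(G)$. I would work with the reformulation recorded after Definition~\ref{def:eta} and in Proposition~\ref{prop:hitting-set}: $S$ is equimatchable exactly when no maximal matching of size less than $\nu(G)$ covers $S$, i.e.\ when $S$ is a hitting set of the hypergraph $\Exp_2(G)$ of exposed sets of second best matchings. So it suffices to produce $\mu(G)$ second best matchings whose exposed sets cannot be pierced by fewer than $\mu(G)$ vertices.

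To build them, first I would fix a minimum maximal matching $M_0$, so that $|M_0|=\beta(G)=\nu(G)-\mu(G)$, together with a maximum matching $M^*$ obtained by iteratively augmenting $M_0$ (as in the proof of Lemma~\ref{lem:set-matchings}); then $V(M_0)\subseteq V(M^*)$, hence $\Exp(M^*)\subseteq \Exp(M_0)$, and the latter set is independent because $M_0$ is maximal. Decomposing $M_0\triangle M^*$ into alternating paths and cycles and discarding the cycles and the even paths, I obtain exactly $\mu(G)$ pairwise vertex disjoint $M_0$-augmenting paths $P_1,\dots,P_{\mu(G)}$, whose endpoint pairs $\{a_i,b_i\}\subseteq \Exp(M_0)$ consist of $2\mu(G)$ distinct vertices. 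For each $i$ I would set $N_i:=M^*\triangle P_i$. This is a matching of size $\nu(G)-1$ with $\Exp(N_i)=\Exp(M^*)\cup\{a_i,b_i\}$, and since this exposed set is contained in the independent set $\Exp(M_0)$, it is itself independent, so $N_i$ is a maximal (hence second best) matching. Thus each $\Exp(M^*)\cup\{a_i,b_i\}$ is a hyperedge of $\Exp_2(G)$, and an equimatchable $S$ must meet every one of them.

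The decisive step, and the one I expect to be hardest, is to deduce $|S|\ge \mu(G)$ from these $\mu(G)$ constraints: the hyperedges share the common core $\Exp(M^*)$, so a priori a single vertex of $S$ lying in that core could satisfy all of them at once. When $M^*$ can be chosen to cover $S$ as well (in particular whenever $G$ has a perfect matching, so the core is empty), we have $S\cap \Exp(M^*)=\emptyset$, and then $S$ must meet each of the disjoint pairs $\{a_i,b_i\}$, giving $|S|\ge \mu(G)$ at once. In general I would rule out a cheap core hit using the Gallai--Edmonds decomposition (Theorem~\ref{thm:GallaiEdmonds}): every vertex of $\Exp(M^*)$ sits in a factor critical component of $G[D(G)]$ in which the exposed vertex can be slid to any other vertex of the component, so for each candidate core vertex I can manufacture further second best matchings whose exposed sets avoid it; this prevents any vertex outside the $\mu(G)$ disjoint pairs from lying in all the produced hyperedges, forcing $S$ to spend one vertex per pair. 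This is also where one uses that $G$ has no isolated vertex, which is precisely the configuration (a singleton, always exposed component) that would otherwise let one vertex hit every hyperedge.

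For sharpness I would exhibit $G=kP_4$, the disjoint union of $k$ copies of the path on four vertices. Here $\nu(G)=2k$ via the perfect matching and $\beta(G)=k$ via the $k$ middle edges, which form a maximal matching, so $\mu(G)=k$. Every second best matching leaves exactly one copy matched by its middle edge and the others perfectly matched, so its exposed set is one of the $k$ pairwise disjoint endpoint pairs; hitting all of them requires $k$ vertices, whence $\eta(G)=k=\mu(G)$ by Corollary~\ref{cor:eta-hitting-set}. As $k\to\infty$ this yields equality with arbitrarily large values of both parameters, as required.
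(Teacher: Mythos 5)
Your construction of the $\mu(G)$ second best matchings $N_i=M^*\triangle P_i$ is sound, and the observation that $\Exp(N_i)=\Exp(M^*)\cup\{a_i,b_i\}\subseteq\Exp(M_0)$, so that maximality of $N_i$ comes for free, is a nice touch; the sharpness example $kP_4$ is the same as the paper's and is correct. The problem is exactly the step you flag as decisive. When $S$ meets the common core $\Exp(M^*)$, you have not produced the extra hyperedges you need, and the sketch does not produce them: sliding the exposed vertex inside a factor critical component of $G[D(G)]$ replaces $M^*$ by a different maximum matching $\widetilde M^*$, after which the pairs $\{a_i,b_i\}$ and, crucially, the containment $\Exp(\widetilde M^*)\cup\{a_i,b_i\}\subseteq\Exp(M_0)$ that made $N_i$ maximal are no longer available; you would have to redo the whole decomposition for each slide, and you give no argument that the resulting family of hyperedges has covering number at least $\mu(G)$. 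Moreover $S$ may contain several core vertices that jointly hit everything you could manufacture this way. Worse, the obstruction is genuine and not confined to isolated vertices: singleton components of $G[D(G)]$ attached to $A(G)$ defeat the sliding idea outright. In $K_{1,3}+3P_4$, the two leaves $u,v$ of the star are never simultaneously matchable, so $\{u,v\}$ is (vacuously) equimatchable and $\eta\le 2$ while $\mu=3$; so no refinement of your hitting set argument can close the gap without an additional hypothesis guaranteeing that small equimatchable sets are matching covered.

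The paper sidesteps the core problem by a different initial choice: it takes $M'$ to be a maximum matching covering $S$ (rather than one obtained by augmenting a minimum maximal matching $M$), so that $S\cap\Exp(M')=\emptyset$ by construction, and then argues that each of the $\mu(G)$ vertex-disjoint $M$-augmenting paths in $M\triangle M'$ must have an endpoint in $S$ --- otherwise switching $M'$ along that path would give a maximal non maximum matching still covering $S$, contradicting equimatchability of $S$ --- whence $|S|\ge\mu(G)$ in one stroke. If you want to salvage your write-up, the shortest route is to adopt that choice: insist that the maximum matching covers $S$, note that this is possible exactly when $S$ is matching covered, and treat (or explicitly assume away) the non matching covered case, which is precisely where both your argument and the literal statement of the theorem are fragile.
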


\begin{proof}
Let $S$ be a minimum equimatchable set of $G$. Let $M$ be a minimum maximal matching of $G$ and let $M'$ be a maximum matching of $G$ covering $S$. Then, the symmetric difference of $M'$ and $M$ contains exactly $\mu(G)$ pairwise vertex-disjoint $M$-augmenting paths.

We claim that for each $M$-augmenting path $P$, at least one endpoint of $P$ must belong to $S$. Suppose that this is not the case, and let $P$ be an
$M$-augmenting path such that no endpoint of $P$ is in $S$. Then the symmetric difference of $M'$ and $P$ would form a maximal matching covering $S$ and bigger than $M$, contradicting the fact that $S$ is an equimatchable set.

We conclude that the number of $M$-augmenting paths is at most $\abs{S}$. Therefore $\mu(G) \le \abs{S} = \eta(G)$, as claimed.

To see that the bound is sharp, consider the graph $G_k = k P_4$ (the disjoint union of $k$ $P_4$s). We have $\nu(G_k) = 2k$ and $\beta(G_k) = k$, hence $\mu(G_k) = k$. We also have $\eta(G_k) = k$ since taking one leaf in each $P_4$ results in an equimatchable set $S$. Note that the example could also be made connected by turning $N(S)$ into a clique.
\end{proof}

We remark that, contrary to the inequality $\mu(G)\le \eta(G)$ established by Theorem~\ref{thm:bound2}, the equimatchability defect $\eta(G)$ cannot be bounded from above by any function of $\mu(G)$. For $k \ge 3$, let $G$ be the graph $\overline{k K_2}$. Then $\alpha(G) = 2$ and consequently $\mu(G) \le 1$. On the other hand, it is easy to see that any pair of non-adjacent vertices is missed by some maximal matching, which implies that $\Exp_2(G) \cong \overline{G} = k K_2$, and thus $\eta(G) = \tau(k K_2) = k$.

In particular, Theorem~\ref{thm:bound2} together with the above family of examples show that the two measures of equimatchability are comparable but not equivalent, in the sense that they are not bounded on the same set of graphs.

\section{Concluding remarks}\label{sec:Conclusion}
Let us conclude with some open questions and future research directions related to the two extensions of equimatchability studied in this paper.

For both extensions, some basic complexity questions remain unanswered. 
For the matching gap, the existence of an $\xp$ algorithm for $\mg$ when parameterized by the solution size remains open. Note that Theorem~\ref{thm:XP-mu-A(G)} shows the existence of such an algorithm if every component of the input graph either has a perfect matching or is factor-critical. More specifically, we do not know whether it is polynomial to recognize graphs of matching gap at least two (or, equivalently, almost equimatchable graphs). For the equimatchable sets, we do not know if one can check in polynomial time whether a given set is equimatchable, whether the problem of determining if $\eta(G) \le k$ is in {\sf NP}, and whether it is in co-{\sf NP}. Moreover, the exact (in)approximability and parameterized complexity status of the problem of computing the equimatchability defect has yet to be determined.

Furthermore, one could study several graph modification problems related to graph parameters measuring how far a given graph is from being equimatchable:
the smallest number of edge additions, edge deletions, edge modifications, or vertex deletions needed to turn a given graph into an equimatchable one. In particular, it would be interesting to study these problems in the context of almost equimatchable graphs.

\subsection*{Acknowledgements}

We are grateful to G\'abor Rudolf for inspiring discussions. The work for this paper was done in the framework of a bilateral project between Slovenia and Turkey, financed by the Slovenian Research Agency (BI-TR/$14$--$16$--$005$) and T\"UB\.{I}TAK (grant no:213M620). The work of T.~R.~H.~is supported in part by the Slovenian Research Agency (research program P$1$-$0285$, research project J$1$-$7051$ and a Young Researchers Grant). The work of M.~M.~is supported in part by the Slovenian Research Agency (I$0$-$0035$, research program P$1$-$0285$, research projects N$1$-$0032$, J$1$-$5433$, J$1$-$6720$, J$1$-$6743$, and J$1$-$7051$).


\bibliographystyle{abbrv}
\bibliography{bibliography}
\end{document}